\newcommand{\C}{{\mathbb C}}
\newcommand{\Z}{{\mathbb Z}}
\newcommand{\eps}{\varepsilon}
\newcommand{\ba}{{\mathbf a}}
\newcommand{\bb}{{\mathbf b}}
\newcommand{\bc}{{\mathbf c}}
\newcommand{\cA}{{\mathbf A}}
\newcommand{\cB}{{\mathbf B}}
\newcommand{\cC}{{\mathbf C}}
\newcommand{\bi}{{\mathbf i}}
\def\Mat{{\rm Mat}}
\newcommand{\G}{\Gamma}
\newcommand{\de}{\em}
\newtheorem{theorem}{Theorem}[section]
\newtheorem{lemma}[theorem]{Lemma}
\newtheorem{corollary}[theorem]{Corollary}
\theoremstyle{definition}
\newtheorem{note}[theorem]{Note}
\newtheorem{definition}[theorem]{Definition}
\newtheorem{rem}[theorem]{Remark}
\begin{document}
\pagestyle{plain}

\title{Distance-regular graphs of $q$-Racah type\\
       and the universal Askey-Wilson algebra}
\author{Paul Terwilliger and Arjana \v{Z}itnik}
\date{}
\maketitle

\begin{abstract} \noindent
Let $\C$ denote the field of complex numbers,
and fix a nonzero $q \in \C$ 
such that $q^4 \ne 1$. Define a  $\C$-algebra $\Delta_q$  by  generators 
and relations in the following way. The  generators are $A,B,C$. 
The relations assert that each of
$$
A+\frac{qBC-q^{-1}CB}{q^2-q^{-2}}, \ \ \
B+\frac{qCA-q^{-1}AC}{q^2-q^{-2}}, \ \ \
C+\frac{qAB-q^{-1}BA}{q^2-q^{-2}}
$$
is central in $\Delta_q$. The algebra $\Delta_q$ is called the 
universal Askey-Wilson algebra.
Let $\Gamma$ denote a distance-regular graph that has $q$-Racah type.
Fix a vertex $x$ of $\Gamma$ and let $T=T(x)$ denote
the corresponding subconstituent algebra.
In this paper we discuss a relationship between $\Delta_q$
and
$T$.
Assuming that every irreducible $T$-module is thin,
we display a surjective $\C$-algebra homomorphism $\Delta_q \to T$. 
This gives a $\Delta_q$ action on the standard module of $T$.

\bigskip

\noindent
{\bf Keywords}. Distance regular graph, $Q$-polynomial,
                Askey-Wilson relations,
                Leonard pair,
                subconstituent algebra.

\noindent
{\bf 2010 Mathematics Subject Classification}. 
Primary:
05E30.  
Secondary: 
33D80.  

\end{abstract}

\section{Introduction}

The universal Askey-Wilson algebra  $\Delta_q$ was introduced 
by the first author in \cite{TerwilligerUAWA}.
The algebra $\Delta_q$ is a central extension of the Askey-Wilson algebra  
\cite{ZhedanovAW}. The algebra $\Delta_q$ is related to
the $q$-Onsager algebra  \cite[Section~9]{TerwilligerUAWA},
the algebra $U_q(\mathfrak{sl}_2)$ \cite{TerwilligerUAWAEq},
and the double affine Hecke algebra of type $(C_1^{\vee}, C_1)$ 
\cite{TerwilligerUAWADAHA}. 
In \cite{Huang1} the finite-dimensional irreducible $\Delta_q$-modules 
are classified up to isomorphism, under the assumption that $q$
is not a root of unity.
In this paper we describe how $\Delta_q$ is related to the subconstituent
algebra of a distance-regular graph
that has $q$-Racah type. 

Before we describe our results we set some  conventions.
An algebra is meant to be associative and have
a 1. A subalgebra has the same 1 as the parent algebra.
The field of complex numbers is denoted $\C$.
Until the end of Section 5 we fix a nonzero $q \in \C$ such that $q^4 \ne 1$.

We now recall the universal Askey-Wilson algebra
\cite[Definition~1.2]{TerwilligerUAWA}. 
Define a $\C$-algebra $\Delta_q$ by  generators 
and relations in the following way. The  generators are $A,B,C$. 
The relations assert that each of
\begin{equation}                              \label{def:AWalgebra}
A+\frac{qBC-q^{-1}CB}{q^2-q^{-2}}, \ \ \
B+\frac{qCA-q^{-1}AC}{q^2-q^{-2}}, \ \ \
C+\frac{qAB-q^{-1}BA}{q^2-q^{-2}}
\end{equation}
is central in $\Delta_q$. The algebra $\Delta_q$ is called the 
{\de universal Askey-Wilson algebra}.

We now recall some notions concerning distance-regular graphs
(see Sections 3 and 4 for formal definitions).
Let $\G$ denote a distance-regular graph with diameter $D \ge 3$.
We say that $\G$ has $q$-Racah type whenever $\G$ has a 
$Q$-polynomial structure with eigenvalue sequence $\{\theta_i\}_{i=0}^D$
and dual eigenvalue sequence $\{\theta_i^*\}_{i=0}^D$ such that
\begin{eqnarray*}
\theta_i   = w + u  q^{2i-D} + v  q^{D-2i},
\qquad \qquad 
\theta_i^* = w^* + u^*  q^{2i-D} + v^*  q^{D-2i}
\end{eqnarray*}
for $0 \leq i \leq D$, with $u, u^*, v, v^*$ nonzero. 
Assume that $\G$ has $q$-Racah type. 
Fix a vertex $x$ of $\G$ and let $T = T(x)$ denote the corresponding 
subconstituent algebra. The algebra $T$ is generated by
the adjacency matrix $A$ and the dual adjacency matrix $A^*=A^*(x)$.
The algebra $T$ is semisimple \cite[Lemma 3.4]{Talg}.
An irreducible $T$-module is said to be thin
whenever its intersection 
with each eigenspace of $A$ and each eigenspace of $A^*$ has dimension at
most 1. Assume that every irreducible $T$-module is thin.
In our main result, 
we display a surjective $\C$-algebra homomorphism $\Delta_q \to T$. 
This gives a $\Delta_q$ action on the standard module of $T$. 
%
To obtain our main results we invoke the theory of Leonard pairs
and Leonard systems \cite{Huang}, \cite{TerwilligerTD}, \cite{TerwilligerMadrid}.

The paper is organized as follows. In Section 2 we recall some
background concerning Leonard pairs and Leonard systems.
In Section 3 we  recall the basic theory concerning 
a distance-regular graph and its subconstituent algebras.
In Section 4 we consider the irreducible modules 
for a subconstituent algebra.
In Section 5 we describe our main results. 
In Section 6 we apply our results to the 2-homogeneous 
bipartite distance-regular graphs.


\section{Leonard pairs and Leonard systems}
\label{sec:LP}

In this section we recall some background concerning Leonard pairs 
and Leonard systems. For more information we refer the reader to
\cite{Huang}, \cite{TerwilligerTD}, \cite{TerwilligerMadrid}. 
Throughout this section fix an integer $d \ge 0$.
Let $V$ denote a vector space over $\C$ with  dimension $d+1$. 
Let $\mbox{End}(V)$ denote the $\C$-algebra of all $\C$-linear
transformations from $V$ to $V$.

We recall the definition of a Leonard pair. We  use the following terms.
Let $S$ denote a square matrix with entries in $\C$.
Then $S$ is called {\de tridiagonal} whenever each nonzero entry lies
on either the diagonal, the subdiagonal, or the superdiagonal. 
Assume that $S$ is tridiagonal. Then $S$ is called {\de irreducible} 
whenever each entry on the subdiagonal is nonzero 
and each entry on the superdiagonal is nonzero.

\begin{definition}   
\label{def:LP}
\rm 
\cite[Definition~1.1]{TerwilligerTD}.
By a {\em Leonard pair} on $V$, we mean an ordered pair $A,A^*$ of elements 
in $\mbox{End}(V)$ that satisfy the conditions (i), (ii) below.
\begin{itemize}
\item[(i)] There exists a basis for $V$ with respect to which the matrix 
           representing $A$ is irreducible tridiagonal and
           the matrix representing $A^*$ is diagonal.
\item[(ii)] There exists a basis for $V$ with respect to which the matrix 
           representing $A^*$ is irreducible tridiagonal and
           the matrix representing $A$ is diagonal.
\end{itemize}
\end{definition}

A Leonard pair is  closely related  to an object called a
Leonard system \cite{TerwilligerTD}.
To define a Leonard system we use the following terms.
Pick $A \in \mbox{End}(V)$. The map $A$  is said to be 
{\it diagonalizable} whenever $V$ is spanned by
the eigenspaces of $A$.
For the moment assume that $A$ is diagonalizable, and let $W$
denote an eigenspace of $A$. Then there exists a unique $E \in
\mbox{End}(V)$
such that $E-I$ is zero on $W$ and $E$ is zero on all the
other eigenspaces of $A$. The map $E$ is called the
{\it primitive idempotent} of $A$ that corresponds to $W$.
The map $A$ is said to be {\de multiplicity-free} 
whenever $A$ has $d+1$ distinct eigenvalues. 
If $A$ is multiplicity-free then $A$ is diagonalizable.

\begin{definition}                            \label{def:LS}
\rm 
\cite[Definition~1.4]{TerwilligerTD}.
By a {\em Leonard system} on $V$ we mean a sequence 
$\Phi=(A,\{E_i\}_{i=0}^d,A^*,\{E_i^*\}_{i=0}^d)$ that satisfies (i)--(v) below.
\begin{itemize}
\item[(i)] Each of $A,A^*$ is a multiplicity-free element in $\mbox{End}(V)$.
\item[(ii)] $\{E_i\}_{i=0}^d$ is an ordering of the primitive idempotents of $A$.
\item[(iii)] $\{E_i^*\}_{i=0}^d$ is an ordering of the primitive idempotents of $A^*$.
\item[(iv)] $\displaystyle{E_iA^*E_j=\left\{
                \begin{array}{ll}
                   0 & \mbox{if} \ \ |i-j|>1\\
                   \ne 0 & \mbox{if} \ \ |i-j|=1
                 \end{array} \right.   
               \ \ \ \ (0 \le i,j \le d).}$
\item[(v)] $\displaystyle{E_i^*AE_j^*=\left\{
                \begin{array}{ll}
                   0 & \mbox{if} \ \ |i-j|>1\\
                   \ne 0 & \mbox{if} \ \ |i-j|=1
                 \end{array} \right.   
               \ \ \ \ (0 \le i,j \le d).}$
\end{itemize}
We call $d$ the {\em diameter} of $\Phi$.
\end{definition}

Leonard pairs and Leonard systems are related as follows.
Let $\Phi=(A,\{E_i\}_{i=0}^d,A^*,$ $\{E_i^*\}_{i=0}^d)$ denote a 
Leonard system on $V$.
For $0 \le i \le d$ let $v_i$ denote a nonzero vector in $E_iV$. 
Then the sequence $\{v_i\}_{i=0}^{d}$ is a basis for $V$ that satisfies
Definition \ref{def:LP}(ii).
For $0 \le i \le d$ let $v_i^*$ denote a nonzero vector in $E_i^*V$. 
Then the sequence $\{v_i^*\}_{i=0}^{d}$ is a basis for $V$ that satisfies
Definition \ref{def:LP}(i). Therefore the pair $A,A^*$ is a Leonard pair on $V$.
Conversely, let $A,A^*$ denote a Leonard pair on $V$.
By \cite[Lemma 1.3]{TerwilligerTD}, each of $A$, $A^*$ is multiplicity-free.
Let $\{v_i\}_{i=0}^{d}$ denote a basis for $V$ that satisfies Definition \ref{def:LP}(ii).
For $0 \le i \le d$ the vector $v_i$ is an eigenvector for $A$; 
let  $E_i$ denote the corresponding primitive idempotent. 
Let $\{v_i^*\}_{i=0}^{d}$ denote a basis for $V$ that satisfies Definition \ref{def:LP}(i).
For $0 \le i \le d$ the vector $v_i^*$ is an eigenvector for $A^*$; 
let  $E_i^*$ denote the corresponding primitive idempotent. 
Then $\Phi=(A,\{E_i\}_{i=0}^d,A^*,\{E_i^*\}_{i=0}^d)$ is a Leonard system on $V$.
We say that the Leonard pair $A,A^*$ and the Leonard system $\Phi$ are {\de associated}.

\begin{definition}
Let $\Phi=(A,\{E_i\}_{i=0}^d,A^*,\{E_i^*\}_{i=0}^d)$ denote a Leonard system on $V$.
For $0 \le i \le d$ let $\theta_i$ (resp. $\theta_i^*$) 
denote the eigenvalue of $A$ (resp. $A^*$)
associated with $E_i$ (resp. $E_i^*$).
We call $\{\theta_i\}_{i=0}^d$ (resp. $\{\theta_i^*\}_{i=0}^d$)
the {\de eigenvalue sequence} (resp. {\de dual eigenvalue sequence} )
of  $\Phi$. 
\end{definition}

Let $\Phi=(A,\{E_i\}_{i=0}^d,A^*,\{E_i^*\}_{i=0}^d)$ denote a Leonard system.
We just defined the eigenvalue sequence and the dual eigenvalue sequence of $\Phi$.
Associated with $\Phi$  are two more sequences of 
scalars, called the first split sequence and
second split sequence \cite[p. 155]{TerwilligerTD}.
These sequences are denoted by $\{\varphi_i\}_{i=1}^d$ and $\{\phi_i\}_{i=1}^d$, respectively.
The sequence 
$(\{\theta_i\}_{i=0}^d,\{\theta_i^*\}_{i=0}^d,
  \{\varphi_i\}_{i=1}^d,\{\phi_i\}_{i=1}^d)$ is called the parameter array
  of $\Phi$.
The Leonard system $\Phi$ is uniquely determined by its parameter array,  up to 
isomorphism of Leonard systems \cite[Theorem 1.9]{TerwilligerTD}.
By \cite[Lemma 5.2, Lemma 6.4]{TerwilligerTD} both
\begin{eqnarray}
\varphi_i &=& (\theta_{i-1}^*-\theta_i^*) \sum_{h=0}^{i-1}\, (a_h-\theta_h),
    \label{eq:calculatevarphi}\\
\phi_i &=& (\theta_{i-1}^*-\theta_i^*) \sum_{h=0}^{i-1}\, (a_h-\theta_{d-h})
    \label{eq:calculatphi}
\end{eqnarray}    
for $1 \le i \le d$, where 
\begin{equation}                   \label{eq:calculateah}
a_h ={\rm trace}(E_h^*A)  \ \ \ \ \ \ \ \ (0 \le h \le d).
\end{equation}.

\begin{definition}            \label{def:LeonardQRacah}
Let $\Phi=(A,\{E_i\}_{i=0}^d,A^*,\{E_i^*\}_{i=0}^d)$ denote 
a Leonard system  on $V$, with  eigenvalue sequence $\{\theta_i\}_{i=0}^d$ 
and dual eigenvalue sequence $\{\theta_i^*\}_{i=0}^d$. 
We say that $\Phi$ has {\de $q$-Racah type} whenever there exist nonzero 
$a,b$ in $\C$ such that  both
\begin{equation*}
\theta_i = aq^{2i-d} + a^{-1}q^{d-2i}, \ \ \ \ \ \ \ \ \ \ 
\theta_i^* = bq^{2i-d} + b^{-1}q^{d-2i} 
\end{equation*}
for $0 \le i \le d$.
\end{definition}

\begin{definition}            \label{def:LPQRacah}
A Leonard pair is said to have {\de $q$-Racah type} whenever an associated
Leonard system has $q$-Racah type.
\end{definition}

Let $\Phi=(A,\{E_i\}_{i=0}^d,A^*,\{E_i^*\}_{i=0}^d)$
denote a Leonard system with parameter array 
$(\{\theta_i\}_{i=0}^d,$ $\{\theta_i^*\}_{i=0}^d,
  \{\varphi_i\}_{i=1}^d,\{\phi_i\}_{i=1}^d)$.
Assume that $\Phi$ has $q$-Racah type.
Let $c \in \C$ denote a root of the equation
\begin{equation}                   \label{eq:kappa}
\xi^2-\kappa \xi +1=0,
\end{equation}
where $\kappa=0$ for $d=0$, and for $d \geq 1$,
\begin{equation}              \label{eq:calculatekappa}          
\kappa = ab^{-1}q^{d-1}+a^{-1}bq^{1-d} + 
\frac{\phi_1}{(q-q^{-1})(q^d-q^{-d})}
\end{equation}
with $a,b$  from
 Definition \ref{def:LeonardQRacah}.
From the form of 
(\ref{eq:kappa}) we see that
 $c \ne 0$. Moreover $c^{-1}$ is also a root of (\ref{eq:kappa}), 
and (\ref{eq:kappa}) has no further roots.
By \cite[Lemma 6.3]{Huang}, both
\begin{eqnarray*}
\varphi_i &=& a^{-1}b^{-1}q^{d+1}(q^i-q^{-i})(q^{i-d-1}-q^{d-i+1})
              (q^{-i}-abcq^{i-d-1})(q^{-i}-abc^{-1}q^{i-d-1}),\\
\phi_i &=& ab^{-1}q^{d+1}(q^i-q^{-i})(q^{i-d-1}-q^{d-i+1})
              (q^{-i}-a^{-1}bcq^{i-d-1})(q^{-i}-a^{-1}bc^{-1}q^{i-d-1})
\end{eqnarray*}
for $1 \le i \le d$. 

We  now recall the Askey-Wilson relations
\cite{ZhedanovAW}.
 We will work with the
$\Z_3$-symmetric version
\cite[Theorem 10.1]{Huang}.

\begin{theorem}{\rm \cite[Theorem 10.1]{Huang}.}      \label{thm:AskeyWilsonZ3}
Let $A,A^*$ denote a Leonard pair on $V$ that has $q$-Racah type.
Recall the scalars $a,b$ from Definition {\rm \ref{def:LeonardQRacah}}
and $c$ from below Definition {\rm \ref{def:LPQRacah}}.
Then there exists a unique $A^\eps \in {\rm End}(V)$ such that
\begin{eqnarray}
A+\frac{qA^*A^\eps-q^{-1}A^\eps A^*}{q^2-q^{-2}} &=&
     \frac{(a+a^{-1})(q^{d+1}+q^{-d-1})+(b+b^{-1})(c+c^{-1})}{q+q^{-1}}I,  \label{eq:AWsim1}\\
A^*+\frac{qA^\eps A -q^{-1}AA^\eps}{q^2-q^{-2}} &=&
     \frac{(b+b^{-1})(q^{d+1}+q^{-d-1})+(c+c^{-1})(a+a^{-1})}{q+q^{-1}}I, \label{eq:AWsim2}\\
A^\eps+\frac{qAA^*-q^{-1}A^*A}{q^2-q^{-2}} &=&
     \frac{(c+c^{-1})(q^{d+1}+q^{-d-1})+(a+a^{-1})(b+b^{-1})}{q+q^{-1}}I. \label{eq:AWsim3}
\end{eqnarray}
\end{theorem}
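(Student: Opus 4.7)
The plan is to take equation (\ref{eq:AWsim3}) as the definition of $A^\eps$, namely
$$
A^\eps := \frac{(c+c^{-1})(q^{d+1}+q^{-d-1})+(a+a^{-1})(b+b^{-1})}{q+q^{-1}}\,I \;-\; \frac{qAA^*-q^{-1}A^*A}{q^2-q^{-2}}.
$$
Since (\ref{eq:AWsim3}) alone forces $A^\eps$ to equal this element, the uniqueness claim is immediate and (\ref{eq:AWsim3}) holds by construction. What remains is to check that this specific $A^\eps$ also satisfies (\ref{eq:AWsim1}) and (\ref{eq:AWsim2}).

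Substituting the definition of $A^\eps$ into (\ref{eq:AWsim1}) and clearing the outer denominator $q^2-q^{-2}$, both $qA^*A^\eps$ and $q^{-1}A^\eps A^*$ expand into cubic polynomials in $A,A^*$ plus terms linear in $A^*$. Collecting transforms (\ref{eq:AWsim1}) into a classical Zhedanov--Askey--Wilson cubic relation
$$
A^{*2}A - (q^2+q^{-2})A^*AA^* + AA^{*2} \;=\; \gamma^*(A^*A+AA^*) + \rho^* A + \omega A^* + \eta^* I,
$$
for suitable scalars $\gamma^*, \rho^*, \omega, \eta^*$ depending only on $a,b,c,q,d$. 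An identical substitution into (\ref{eq:AWsim2}) yields the companion cubic relation obtained by swapping $A\leftrightarrow A^*$. So the theorem reduces to two tasks: showing that any $q$-Racah Leonard pair satisfies these two classical Askey--Wilson relations, and matching the resulting scalars against the right-hand sides of (\ref{eq:AWsim1}) and (\ref{eq:AWsim2}).

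For the first task, I would work in the basis $\{v_i^*\}_{i=0}^{d}$ from Definition \ref{def:LP}(i), in which $A^*$ is diagonal with entries $\theta_i^*$ and $A$ is irreducible tridiagonal with entries expressible through $\theta_i, \theta_i^*, \varphi_i, \phi_i$. All four sequences admit the explicit $q$-Racah closed forms displayed just before the theorem, and the cubic identity then factors into a family of $q$-Pochhammer cancellations indexed by matrix position. For the coefficient match, one evaluates trace-like quantities against the parameter array, using in particular (\ref{eq:calculatekappa}) to feed $c+c^{-1}=\kappa$ into the constant terms.

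The main obstacle is precisely this coefficient bookkeeping: the right-hand sides of (\ref{eq:AWsim1})--(\ref{eq:AWsim3}) are arranged so that the cyclic substitution $A \mapsto A^* \mapsto A^\eps \mapsto A$ is a $\Z_3$ symmetry, and making this symmetry visible requires carefully tracking the symmetric combinations of $a+a^{-1}$, $b+b^{-1}$, $c+c^{-1}$, and $q^{d+1}+q^{-d-1}$ through the reduction. This is exactly the calculation carried out in \cite[Theorem~10.1]{Huang}, so the cleanest proof simply invokes that result after verifying that Definition \ref{def:LeonardQRacah} together with (\ref{eq:calculatekappa}) produces the same hypothesis on $(a,b,c,d)$ as the one in \cite{Huang}.
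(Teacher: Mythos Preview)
The paper does not prove this theorem at all; it is stated as a citation of \cite[Theorem~10.1]{Huang} and used as a black box. Your outline---define $A^\eps$ by (\ref{eq:AWsim3}), obtain uniqueness for free, then reduce (\ref{eq:AWsim1}) and (\ref{eq:AWsim2}) to the classical Askey--Wilson cubic relations and verify them for the $q$-Racah parameter array---is the correct strategy and is indeed how the result in \cite{Huang} is established. Since your final paragraph ends by invoking \cite[Theorem~10.1]{Huang} anyway, your proposal and the paper ultimately agree: both defer the actual coefficient verification to Huang.
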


\section{Background on distance-regular graphs}
\label{sec:prelim}

In this section we review some  basic concepts 
concerning distance-regular graphs.
See Brouwer, Cohen and Neumaier \cite{BCN} and Terwilliger 
\cite{Talg,Talg2,Talg3}
for more background information.

Let $X$ denote a nonempty finite set.
Let  $\Mat_X(\C)$ denote the $\C$-algebra consisting of the matrices 
whose rows and columns are indexed by $X$ and whose entries are in $\C$. 
Let $V = \C^X$ denote the vector space over $\C$ consisting of the column 
vectors whose coordinates are indexed by $X$ and whose entries are in $\C$.
Observe that $\Mat_X(\C)$ acts on $V$ by left multiplication.
We endow $V$ with the Hermitean inner product 
$\langle \, , \, \rangle$ that satisfies
$\langle u, v \rangle = u^t \bar{v}$ for $u, v \in V$, 
where $t$ denotes transpose and $\bar{ }$  denotes complex conjugation.
%
Let $W,W'$ denote 
nonempty subsets of $V$.
These subsets are said
to be {\em orthogonal} whenever $\langle w,w^\prime \rangle=0$ for all 
$w \in W$ and $w^\prime \in W^\prime$.

Let $\G$ denote a finite, undirected, connected
graph, without loops or multiple edges, with vertex set $X$, 
path-length distance function $\partial$ and
diameter 
$D=\max \{\partial(x,y)\, |\,$ $x,y \in X\}$.
For a vertex $x \in X$ and integer  $i \ge 0$
define $$\G_i(x)= \{ y \in X \, \vert \ \partial (x,y)=i \}.$$
The graph $\G$ is said to be {\de distance-regular} 
whenever for all integers $h,i,j$ $(0 \le h,i,j\le D)$ 
and vertices $x,y \in X$ with $\partial(x,y)=h$,
the number $p_{ij}^h=|\G_i(x)\cap \G_j(y)|$ is independent of $x,y$.
The constants $p_{ij}^h$ are called the {\de intersection numbers} of $\G$.
{F}rom now on  assume that $\G$ is distance-regular with diameter $D \ge 3$.

We recall the Bose-Mesner algebra of $\G$.
For $0\le i\le D$ let $A_i$ denote the matrix in $\Mat_X(\C)$ with 
$(x,y)$-entry 
$$
(A_i)_{xy}\, =\, \left\{ \begin{array}{ll}
                          1; & \mbox{if} \ \ \partial(x,y)=i\\
                          0; & \mbox{if} \ \ \partial(x,y)\ne i\\
                           \end{array} \right. 
\ \ \ \ \ (x,y \in X).
$$
We call $A_i$ the {\de $i$-th distance matrix} of $\G$. 
Note that $A_i$ is real and symmetric. 
Observe that $A_0=I$, where $I$ is the identity matrix
in 
 $\Mat_X(\C)$.
The matrix $A=A_1$ is the  {\em adjacency matrix} of $\G$.
We observe that  $J=\sum_{i=0}^D A_i$,
where $J \in
 \Mat_X(\C)$ has all entries 1.
Moreover
$A_iA_j=\sum_{h=0}^D p_{ij}^h A_h $ for $0 \le i,j \le D$.
Let $M$ denote the subalgebra of $\Mat_X(\C)$ generated by $A$.
By \cite[p.~127]{BCN} the matrices $\{ A_i\}_{i=0}^D$ form a basis for $M$.
We call $M$ the {\de Bose-Mesner algebra} of $\G$. 
%
%
By \cite[p.~45]{BCN}, $M$ has a basis $\{E_i\}_{i=0}^D$
such that (i) $E_0=|X|^{-1}J$; (ii) $I=\sum_{i=0}^D E_i$;
(iii) $E_iE_j =\delta_{ij}E_i$ for $0 \le i,j\le D$.
By \cite[p.~59,\, 64]{BI} the matrices $\{E_i\}_{i=0}^D$ are real 
and symmetric. We call $\{ E_i\}_{i=0}^D$ the {\de primitive idempotents} 
of $\G$. 
We call $E_0$ {\it trivial}.
Since $\{ E_i\}_{i=0}^D$ form a basis for $M$, there
exist complex scalars $\{ \theta_i\}_{i=0}^D$ such that 
\begin{equation}                   \label{prodAEi0}
A\ =\ \sum_{i=0}^D \theta_i E_i.
\end{equation}
By (\ref{prodAEi0}) and since $E_i E_j = \delta_{ij}E_i$ we have
\begin{equation}                   \label{prodAEi}
AE_i\, =\, E_iA\, =\, \theta_i E_i \ \ \ \   
\mbox{($0\le i\le D$)}.
\end{equation}
We call  $\theta_i$ the {\de eigenvalue} of $\G$
corresponding to $E_i$. Note that the eigenvalues $\{ \theta_i\}_{i=0}^D$ 
are mutually distinct since $A$ generates $M$. 
Moreover $\{\theta_i\}_{i=0}^D$ are real, since $A$ and 
$\{ E_i\}_{i=0}^D$ are real.  The vector space $V$ decomposes as
$$
V=\sum_{i=0}^DE_iV  \ \ \ \ \mbox{(orthogonal direct  sum)}.
$$

We now recall the Krein parameters.
Let $\circ$ denote the entry-wise product in $\Mat_X(\C)$.
Observe $A_i \circ A_j =\delta_{ij}A_i$ for $0\le i,j\le D$,
so $M$ is closed under $\circ$.
Therefore there exist complex scalars $q^h_{ij}$ such that
\begin{equation}\label{kreindef}
E_i \circ E_j = |X|^{-1}\, \sum_{h=0}^D q^h_{ij} E_h
\ \ \ \ \ \ \ (0 \le i,j\le D).
\end{equation}
We call the $q^h_{ij}$ the {\de Krein parameters} of $\G$.
These parameters are real and nonnegative \cite[p.~48--49]{BCN}.
Let $\{ E_i\}_{i=1}^D$ denote an ordering of the nontrivial primitive 
idempotents of $\G$. 
The ordering $\{ E_i\}_{i=1}^D$ is called {\it $Q$-polynomial} whenever
for all $0\le i,j\le D$ the Krein parameter $q^1_{ij}$ is zero 
if $|i-j|>1$ and nonzero if $|i-j|=1$. The graph $\G$ is called
{\de $Q$-polynomial} (with respect to the given ordering $\{ E_i\}_{i=1}^D$)
whenever the  ordering $\{ E_i\}_{i=1}^D$ is $Q$-polynomial.
Until the end of Section 5 we  assume that $\G$ is $Q$-polynomial 
with respect to $\{ E_i\}_{i=1}^D$.

We recall the dual Bose-Mesner algebras of $\G$ \cite[p.~378]{Talg}. 
For the rest of this paper, fix a vertex $x \in X$. 
For $0\le i \le D$ let $E_i^* = E_i^*(x)$ 
denote the diagonal matrix in $\Mat_X(\C)$ with $(y,y)$-entry
$$
(E_i^*)_{yy}\, =\, \left\{ \begin{array}{ll}
                          1; & \mbox{if} \ \ \partial(x,y)=i\\
                          0; & \mbox{if} \ \ \partial(x,y)\ne i\\
                           \end{array} \right. 
\ \ \ \ \ (y \in X).
$$
%
We call $E_i^*$ the {\de $i$-th dual idempotent of $\G$ with 
respect to $x$.}
Observe  that $I=\sum_{i=0}^D E_i^*$
and  $E_i^*E_j^*=\delta_{ij}E_i^*$ for $0\le i,j\le D$.
Therefore the matrices $\{ E_i^*\}_{i=0}^D$ form a basis for 
a commutative subalgebra $M^*=M^*(x)$ of  $\Mat_X(\C)$.
We call $M^*$ the {\de dual Bose-Mesner algebra} of $\G$
with respect to $x$. 

For $0\le i \le D$ let $A_i^*=A_i^*(x)$ denote the diagonal matrix
in $\Mat_X(\C)$ with $(y,y)$-entry
$$
(A_i^*)_{yy}\, =\, |X|\,(E_i)_{xy} \ \ \ \ \ (y \in X).
$$
We call $A_i^*$ the {\it dual distance matrix} 
corresponding to $E_i$ and $x$.
By \cite[p.~379]{Talg} the matrices $\{ A_i^*\}_{i=0}^D$ form a 
basis for $M^*$.  We abbreviate $A^* = A_1^*$ and call this the
{\em dual adjacency matrix} of $\G$ with respect to $x$. 
The matrix $A^*$ generates $M^*$ \cite[Lemma 3.11]{Talg}.

Since $\{ E_i^*\}_{i=0}^D$ form a basis for $M^*$, there
exist complex scalars $\{ \theta_i^*\}_{i=0}^D$ such that 
\begin{equation}                   \label{prodAstarEi0}
A^*\ =\ \sum_{i=0}^D \theta^*_i E_i^*.
\end{equation}
By (\ref{prodAstarEi0}) and since $E_i^* E_j^* = \delta_{ij}E_i^*$ we have
\begin{equation}                   \label{prodAstarEi}
A^*E_i^*\, =\, E_i^*A^*\, =\, \theta_i^* E_i^* \ \ \ \   
\mbox{($0\le i\le D$)}.
\end{equation}
We call $\theta_i^*$ the {\de dual eigenvalue} of $\G$
corresponding to $E_i^*$. Note that the dual eigenvalues $\{ \theta_i^*\}_{i=0}^D$ 
are mutually distinct since $A^*$ generates $M^*$. 
Moreover $\{\theta_i^*\}_{i=0}^D$ are real, since $A^*$ and 
$\{ E_i^*\}_{i=0}^D$ are real. The vector space $V$ decomposes as
$$
V=\sum_{i=0}^DE_i^*V  \ \ \ \ \mbox{(orthogonal direct  sum)}.
$$


Let $T = T (x)$ denote the subalgebra of $\Mat_X(\C)$ generated by 
$M$ and $M^*$. We call  $T$ the {\de subconstituent algebra} 
(or {\de Terwilliger algebra}) of $\G$ with respect to  $x$ \cite[p.~380]{Talg}.
Note that $T$ is generated by  $A$, $A^*$.
Since each of $A$ and $A^*$ is real and symmetric,
$T$ is closed under the conjugate-transpose map.
Hence $T$ is semisimple  \cite[Lemma 3.4]{Talg}.


\section{$T$-modules}

We continue to discuss the subconstituent algebra $T$
of our distance-regular graph $\Gamma$.
In this section we consider the $T$-modules.
By a {\de $T$-module} we mean a subspace $W$ of $V$ such that
$SW \subseteq W$ for all $S \in T$.
We refer to $V$ itself as the {\de standard module} for $T$.
A $T$-module $W$ is said to be {\de irreducible} whenever $W$ is nonzero 
and contains no $T$-modules other than $0$ and $W$. 

Let $W$ denote a $T$-module. Since $T$ is closed under the conjugate-transpose
map, the orthogonal complement $W^\perp$ is also a $T$-module.
Therefore $V$ decomposes into an orthogonal direct sum of
irreducible $T$-modules. 

%
%

We now recall the notion of endpoint, dual endpoint, diameter, and
dual diameter. Let $W$ denote an irreducible $T$-module. Observe that 
$W$ is a direct sum of the nonzero spaces among $E_0^*W,\dots, E_D^*W$. 
Similarly, $W$ is a direct sum of the nonzero spaces among $E_0W,\dots, E_DW$. 
Define
$\rho = \min\{i \, \vert \ 0 \le i \le D, E_i^*W \ne 0\}$ and
$\tau = \min\{i \, \vert \ 0 \le i \le D, E_iW \ne 0\}$. 
We call $\rho$ and $\tau$ the {\de endpoint} and {\de dual endpoint} of $W$, 
respectively. Define
$d=|\{i \, \vert \ 0 \le i \le D, E_i^*W \ne 0\}|-1$ and
$d^*=|\{i \, \vert \ 0 \le i \le D, E_iW \ne 0\}|-1$. We call $d$ and $d^*$ the
{\de diameter} and {\de dual diameter} of $W$, respectively.
By \cite[Corollary 3.3]{Pascasio} the diameter of $W$ is equal to the
dual diameter of $W$. 
By \cite[Lemma 3.9, Lemma 3.12]{Talg} $\dim E_i^*W \le 1$ for $0 \le i \le D$ 
if and only if $\dim E_iW \le 1$ for $0 \le i \le D$. 
In this case $W$ is said to be {\de thin}.

\begin{lemma}{\rm \cite[Lemma 3.4, Lemma 3.9, Lemma 3.12]{Talg}.}       \label{thin}
Let $W$ denote an irreducible $T$-module with endpoint $\rho$, 
dual endpoint $\tau$, and diameter $d$. Then $\rho,\tau,d$ are nonnegative
integers such that $\rho+d \le D$ and $\tau + d \le D$. Moreover the following
{\rm (i)--(iv)} hold.
\begin{itemize}
\item[{\rm (i)}] $E_i^*W \ne 0$ if and only if $\rho \le i \le \rho+d,$ \ \ \ $(0 \le i \le D)$.
\item[{\rm (ii)}] $W=\sum_{h=0}^d \, E_{\rho+h}^*W$ \ \ \  \mbox{\rm (orthogonal direct sum)}.
\item[{\rm (iii)}] $E_iW \ne 0$ if and only if $\tau \le i \le \tau+d,$ \ \ \ $(0 \le i \le D)$.
\item[{\rm (iv)}] $W=\sum_{h=0}^d \, E_{\tau+h}W$ \ \ \ \mbox{\rm (orthogonal direct sum)}.
\end{itemize}
\end{lemma}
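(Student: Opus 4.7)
The plan is to exploit the tridiagonal action of $A$ on the $E_i^*$-decomposition of $V$ and the dual tridiagonal action of $A^*$ on the $E_i$-decomposition, and combine these with the irreducibility of $W$ to force the supporting index sets to be contiguous intervals. First observe that since each $E_i^* \in T$ and the $E_i^*$ are mutually orthogonal idempotents summing to $I$, the space $W$ decomposes orthogonally as $W = \bigoplus_{i=0}^D E_i^* W$, and similarly $W = \bigoplus_{i=0}^D E_i W$. The key technical input is that $E_i^* A E_j^* = 0$ whenever $|i-j|>1$: if $\partial(x,y)=i$ and $\partial(x,z)=j$ with $|i-j|>1$, the triangle inequality forces $\partial(y,z) \ge |i-j| > 1$, so $(A)_{yz}=0$. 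Dually, the $Q$-polynomial hypothesis on $\{E_i\}_{i=1}^D$ yields $E_i A^* E_j = 0$ for $|i-j|>1$, which is essentially \cite[Lemma 3.2]{Talg} and follows from the vanishing pattern of the Krein parameters $q^1_{ij}$.

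For (i) and (ii), set $R=\{i : 0 \le i \le D,\, E_i^* W \ne 0\}$, so $\rho = \min R$. I claim $R$ is a contiguous interval. Suppose not, and pick the least $m$ with $\rho < m$, $E_m^* W = 0$, and $E_j^* W \ne 0$ for some $j > m$. Let $W' = \sum_{k=0}^{m-1} E_k^* W$. Then $W'$ is nonzero (it contains $E_{m-1}^* W$) and properly contained in $W$ (it misses $E_j^* W$). It is $M^*$-invariant, since every $A_i^*$ commutes with each $E_k^*$. It is also $A$-invariant: for $k \le m-1$ the relation above gives $AE_k^* W \subseteq E_{k-1}^* W + E_k^* W + E_{k+1}^* W$, and when $k=m-1$ the final summand vanishes because $E_m^* W = 0$, so $AW' \subseteq W'$. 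Since $T$ is generated by $M$ and $M^*$, $W'$ is a proper nonzero $T$-submodule of $W$, contradicting irreducibility. Hence $R = \{\rho, \rho+1,\ldots,\rho+d\}$ for some integer $d \ge 0$, which gives (i); statement (ii) is then immediate from the orthogonal decomposition, and $\rho + d \le D$ is built into the definition of $R$.

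Parts (iii) and (iv) follow by the same argument with the roles of $(A, \{E_i^*\}, M^*)$ and $(A^*, \{E_i\}, M)$ interchanged, invoking the dual tridiagonal relation $E_i A^* E_j = 0$ for $|i-j|>1$; this yields a contiguous interval $\{\tau, \tau+1, \ldots, \tau+d^*\}$ supporting the $E_i$-decomposition of $W$, with $\tau + d^* \le D$. The equality of the two widths, $d=d^*$, is precisely \cite[Corollary 3.3]{Pascasio}, already cited in the paragraph preceding the lemma, and plugging it in yields the stated form of (iii) and (iv). The only nontrivial step is the dual tridiagonal relation: it is the single place where the $Q$-polynomial assumption on $\{E_i\}_{i=1}^D$ enters, and everything else is a clean interval-squeezing application of irreducibility on top of the orthogonal idempotent decomposition.
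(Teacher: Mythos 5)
Your proof is correct, and it reconstructs exactly the argument behind the cited results: the paper itself offers no proof of this lemma, merely citing \cite[Lemmas 3.4, 3.9, 3.12]{Talg} and \cite[Corollary 3.3]{Pascasio}, and your interval-squeezing via the tridiagonal relations $E_i^*AE_j^*=0$ and $E_iA^*E_j=0$ for $|i-j|>1$, combined with irreducibility, is precisely the approach of those sources. Your handling of the one genuinely nontrivial input --- that the equality $d=d^*$ of diameter and dual diameter comes from Pascasio rather than from the squeezing argument itself --- is also correctly attributed.
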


Let $W$ and $W'$ denote irreducible $T$-modules. 
These $T$-modules are called {\de  isomorphic} whenever there exists 
a vector space  isomorphism $\sigma:W \to W^\prime$ such that 
$(\sigma S - S \sigma)W=0$  for all $S \in T$.
Assume that the $T$-modules $W$ and $W'$ are isomorphic.
Then they have the same endpoint, dual endpoint and diameter. 
Now assume that the  $T$-modules $W$ and $W'$ are not isomorphic.
Then $W$ and $W'$ are orthogonal \cite[Lemma 3.3]{Curtin}.
%
%
Let $\Psi$ denote the set of isomorphism classes of irreducible $T$-modules.
The elements of $\Psi$ are called {\de types}. For $\psi \in \Psi$
let $V_\psi$ denote the subspace of $V$ spanned by the irreducible
$T$-modules of type $\psi$. 
Call $V_\psi$ the {\de homogeneous component of $V$} of type $\psi$.
Observe that $V_\psi$ is a $T$-module. We have the
 decomposition
\begin{eqnarray}                                 \label{eq:decompose}
V = \sum_{\psi \in \Psi} V_\psi \ \ \ \ \mbox{(orthogonal direct  sum)}.
\end{eqnarray}
For $\psi \in \Psi$ define $e_\psi \in 
 \Mat_X(\C)$ such that
 $(e_\psi -I)V_\psi=0$ and 
 $e_\psi V_\lambda = 0 $ for all $\lambda \in \Psi$ with $\lambda \not=\psi$.
By construction
$I =\sum_{\psi \in \Psi} e_\psi$. Moreover
$e_\psi e_\lambda = \delta_{\psi,\lambda} e_\psi
$ for
$\psi, \lambda \in \Psi$.
According to the Wedderburn theory \cite[Chapter IV]{CurtisReiner}
the elements $\lbrace e_\psi \rbrace_{\psi \in \Psi}$ form
a basis for the center of $T$.

\section{The main results}

We continue to discuss the subconstituent algebra
$T$ of our distance-regular graph $\Gamma$.
In this section we obtain our main results,
which concern how 
$T$ is related to the universal Askey-Wilson algebra
$\Delta_q$.
\medskip

Recall the $Q$-polynomial ordering $\{E_i\}_{i=1}^D$ of the nontrivial
primitive idempotents of $\G$.

\begin{definition}                              
{\rm \cite[Section~1]{ItoTerwilliger1}.}
\label{def:QRacah}
Our  $Q$-polynomial structure $\{E_i\}_{i=1}^D$ is said to have
{\de $q$-Racah type} whenever there exist  scalars 
$u, u^*, v, v^*, w, w^*$ in $\C$ with 
$u, u^*, v, v^*$ nonzero such that both
$$
\theta_i = w + uq^{2i-D }+ vq^{D-2i}, 
\ \ \ \ \ \ \ \ \ \
\theta_i^* = w^* + u^*q^{2i-D }+ v^*q^{D-2i}
$$
for $0 \le i \le D$.
\end{definition}
\begin{definition}    
The graph $\G$ is said to have {\de $q$-Racah type} whenever $\G$ has a
$Q$-polynomial structure  of $q$-Racah type.
\end{definition}
\medskip

{F}rom now on, assume that
our $Q$-polynomial structure $\{E_i\}_{i=1}^D$
has $q$-Racah type, as in Definition \ref{def:QRacah}.
In Section \ref{sec:prelim} we defined the adjacency matrix $A$
and the dual adjacency matrix $A^*$.
We now adjust  $A$ and $A^*$.
Define  scalars  $a,b \in \C$ such that 
$a^2 = u/v$ and $b^2 = u^*/v^*$. Note that $a$ and $b$ are nonzero.
Define matrices $\cA, \cB$ in $\Mat_X(\C)$ as follows:
\begin{equation}                                   \label{eq:defcAcB}
\cA=\frac{A-wI}{av}, \ \ \ \ \ \ \ \ \ \
\cB=\frac{A^*-w^*I}{bv^*}.
\end{equation}

\begin{lemma}                              \label{lemma:tildeeigenvalue}
The following hold for $0 \le i \le D$.
\begin{itemize}
\item[{\rm (i)}] The  eigenvalue of  $\cA$ associated with
$E_i$ is
\begin{eqnarray}                    \label{eq:eigenAtilde}
aq^{2i-D }+ a^{-1}q^{D-2i}.
\end{eqnarray}
\item[{\rm (ii)}] The  eigenvalue of  $\cB$ associated with
$E_i^*$ is
\begin{eqnarray}
bq^{2i-D }+ b^{-1}q^{D-2i}.         \label{eq:eigenAtildestar}
\end{eqnarray}
\end{itemize}
\end{lemma}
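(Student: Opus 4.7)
The plan is to establish both parts by a direct spectral computation: since $\cA$ and $\cB$ are affine-linear transformations of $A$ and $A^*$ respectively, their eigenvalues on the corresponding idempotents are obtained by applying those same affine transformations to $\theta_i$ and $\theta_i^*$.

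For part (i), I would first recall from (\ref{prodAEi}) that $A E_i = \theta_i E_i$, which combined with the definition (\ref{eq:defcAcB}) of $\cA$ gives
\[
\cA E_i \;=\; \frac{A - wI}{av}\, E_i \;=\; \frac{\theta_i - w}{av}\, E_i.
\]
Substituting the $q$-Racah form $\theta_i = w + uq^{2i-D} + vq^{D-2i}$ from Definition \ref{def:QRacah} yields the eigenvalue $(uq^{2i-D} + vq^{D-2i})/(av)$. The final simplification uses the chosen square root $a^2 = u/v$, equivalently $u = a^2 v$: the first summand becomes $a q^{2i-D}$ and the second becomes $a^{-1} q^{D-2i}$, giving exactly (\ref{eq:eigenAtilde}). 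Note the choice of sign of $a$ is immaterial here, since the derivation uses only $a^2 = u/v$ (and nonvanishing of $a$ to legalize the division by $av$, which is why the definition required $u,v$ nonzero).

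For part (ii), the argument is formally identical: invoking (\ref{prodAstarEi}) in place of (\ref{prodAEi}), the definition of $\cB$ from (\ref{eq:defcAcB}), the $q$-Racah form of $\theta_i^*$, and the relation $b^2 = u^*/v^*$, one obtains the eigenvalue $bq^{2i-D} + b^{-1}q^{D-2i}$ on $E_i^* V$.

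There is no genuine obstacle in this proof; the lemma is essentially a bookkeeping statement that justifies and motivates the normalizations in (\ref{eq:defcAcB}). What it accomplishes for the paper is to put the eigenvalue sequences of $\cA$ and $\cB$ into precisely the form required by Definition \ref{def:LeonardQRacah}, so that the Leonard pair technology of Section \ref{sec:LP} (in particular Theorem \ref{thm:AskeyWilsonZ3}) can be applied to the $\cA, \cB$ action on any thin irreducible $T$-module.
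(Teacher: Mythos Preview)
Your proof is correct and follows the same route as the paper: compute the eigenvalue of $\cA$ on $E_i$ as $(\theta_i - w)/(av)$ from the definition (\ref{eq:defcAcB}) and the spectral relation for $A$, then simplify using $a^2 = u/v$; similarly for part (ii). You have simply made explicit the arithmetic that the paper leaves implicit.
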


\begin{proof}
(i) By (\ref{prodAEi0}) and since
(\ref{eq:eigenAtilde}) is equal to $(\theta_i -w)/(av)$.\\
(ii) By (\ref{prodAstarEi0}) and since
(\ref{eq:eigenAtildestar}) is equal to $(\theta^*_i -w^*)/(bv^*)$.
\end{proof}

We mentioned in Section \ref{sec:prelim} that $T$ is generated by $A,A^*$.
By this and (\ref{eq:defcAcB}) we see that $T$ is generated by $\cA,\cB$.

\begin{lemma}                              \label{lemma:moduleigenvalue}
Let $W$ denote a thin irreducible $T$-module with 
endpoint $\rho$,  dual endpoint $\tau$, and diameter $d$. Then
$(\cA, \{E_i\}_{i=\tau}^{\tau + d}, \cB, \{E_i^*\}_{i=\rho}^{\rho+d})$ 
acts on $W$ as a Leonard system of $q$-Racah type.  
For this Leonard system  the eigenvalue sequence is
\begin{eqnarray}
a(W)q^{2i-d }+ a(W)^{-1}q^{d-2i}  \ \ \ \ \ \ \ \ (0 \le i \le d),
                                                \label{eq:WeigenAtilde}
\end{eqnarray}
where $a(W)=aq^{2\tau+d-D}$. Moreover  the dual eigenvalue sequence is
\begin{eqnarray}
b(W)q^{2i-d }+ b(W)^{-1}q^{d-2i}  \ \ \ \ \ \ \ \ (0 \le i \le d),
                                                \label{eq:WeigenAtildestar}
\end{eqnarray}
where $b(W)=bq^{2\rho+d-D}$.   
\end{lemma}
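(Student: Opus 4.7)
The plan is to verify, restricted to $W$, the five axioms of Definition~\ref{def:LS} for the tuple $(\cA,\{E_i\}_{i=\tau}^{\tau+d},\cB,\{E_i^*\}_{i=\rho}^{\rho+d})$, and then to read off the eigenvalue sequences from Lemma~\ref{lemma:tildeeigenvalue}.

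First I set up the restrictions. Thinness together with Lemma~\ref{thin} yields $\dim E_iW = 1$ for $\tau \le i \le \tau+d$, $\dim E_i^*W = 1$ for $\rho \le i \le \rho+d$, and these one-dimensional pieces orthogonally decompose $W$. Because $\cA = (A-wI)/(av)$ is an affine transform of $A$, its action on $E_iW$ is scalar multiplication by $aq^{2i-D}+a^{-1}q^{D-2i}$ by Lemma~\ref{lemma:tildeeigenvalue}(i); as $i$ ranges over $\tau,\dots,\tau+d$ these $d+1$ scalars are distinct (being an affine image of the distinct $\theta_i$), so $\cA|_W$ is multiplicity-free and the restrictions $E_i|_W$ are its primitive idempotents. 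The dual statement holds for $\cB|_W$ with primitive idempotents $E_i^*|_W$ and eigenvalues from Lemma~\ref{lemma:tildeeigenvalue}(ii). This delivers axioms (i)--(iii) of Definition~\ref{def:LS}.

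Next I handle the tridiagonality axioms. The vanishing $E_i^* A E_j^* = 0$ for $|i-j|>1$ is immediate from distance-regularity, since $A$ only links vertices at consecutive distances from $x$; the vanishing $E_i A^* E_j = 0$ for $|i-j|>1$ is the standard $Q$-polynomial consequence obtained from (\ref{kreindef}) together with (\ref{prodAstarEi0}). For the non-vanishing I argue by contradiction: if $E_{\rho+i+1}^* A E_{\rho+i}^*|_W = 0$ for some $0\le i<d$, then $W' = \bigoplus_{h=0}^{i} E_{\rho+h}^*W$ is closed under $M^*$ by construction and under $A$ by the tridiagonality just established, hence a proper nonzero $T$-submodule of $W$, contradicting irreducibility; the other off-diagonal block and the dual pair of statements are handled symmetrically. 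This verifies axioms (iv) and (v), so the 4-tuple acts on $W$ as a Leonard system. Finally, substituting $i = \tau+j$ in (\ref{eq:eigenAtilde}) gives $aq^{2(\tau+j)-D}+a^{-1}q^{D-2(\tau+j)} = a(W)q^{2j-d}+a(W)^{-1}q^{d-2j}$ with $a(W) = aq^{2\tau+d-D}$, and the analogous substitution on the dual side gives $b(W) = bq^{2\rho+d-D}$; the form of both sequences is exactly Definition~\ref{def:LeonardQRacah}, so the Leonard system is of $q$-Racah type.

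The main obstacle is the nonzero clauses of axioms (iv) and (v): the zero entries are bookkeeping consequences of distance-regularity and $Q$-polynomiality, but the off-diagonal non-vanishing must be extracted from the irreducibility of $W$ by the submodule-elimination argument above, and it is here that the thin and irreducible hypotheses do the essential work.
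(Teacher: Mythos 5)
Your proof is correct, but it takes a genuinely different route from the paper's. The paper disposes of the Leonard-system claim in a single stroke: it cites \cite[Theorem~4.1]{Talg2}, which asserts precisely that for a thin irreducible $T$-module the sequence $(A,\{E_i\}_{i=\tau}^{\tau+d},A^*,\{E_i^*\}_{i=\rho}^{\rho+d})$ acts on $W$ as a Leonard system; it then passes from $A,A^*$ to $\cA,\cB$ via the affine substitutions (\ref{eq:defcAcB}) and reads off the eigenvalue and dual eigenvalue sequences from Lemma~\ref{lemma:tildeeigenvalue}, exactly as you do in your final step. What you have done instead is reprove the needed special case of that cited theorem from scratch: one-dimensionality of the spaces $E_iW$, $E_i^*W$ from thinness and Lemma~\ref{thin}, multiplicity-freeness of the restrictions (giving axioms (i)--(iii) of Definition~\ref{def:LS}), vanishing of the far off-diagonal blocks from distance-regularity and the $Q$-polynomial property, and --- the real content --- nonvanishing of the adjacent blocks via your submodule-elimination argument, which is sound (closure of $\bigoplus_{h=0}^{i}E_{\rho+h}^*W$ under $M^*$ and under $A$ does yield a $T$-submodule, since $T$ is generated by $A$ and $A^*$, and irreducibility then forces the contradiction; the three symmetric variants go through the same way). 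Your route buys self-containedness at the cost of length; the paper's buys brevity by deferring to the literature. One small citation slip worth fixing: the vanishing $E_iA^*E_j=0$ for $|i-j|>1$ does not follow from (\ref{kreindef}) together with (\ref{prodAstarEi0}) --- the latter is merely the spectral expansion of $A^*$ in the dual idempotents and plays no role here. It follows from (\ref{kreindef}) together with the definition of $A^*=A_1^*$ via the entries of $E_1$, i.e.\ \cite[Lemma~3.2]{Talg}, which gives $E_iA^*E_j=0$ if and only if $q^1_{ij}=0$, and then the $Q$-polynomial hypothesis applies. This is a misattribution rather than a gap, since the fact you invoke is true and standard.
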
 

\begin{proof}
By \cite[Theorem 4.1]{Talg2} and since 
$W$ is thin, 
the sequence
$(A, \{E_i\}_{i=\tau}^{\tau + d}, A^*, \{E_i^*\}_{i=\rho}^{\rho+d})$ 
acts on $W$ as a Leonard system. 
By this and
(\ref{eq:defcAcB}), the sequence
$\Phi=(\cA, \{E_i\}_{i=\tau}^{\tau + d}, 
\cB, \{E_i^*\}_{i=\rho}^{\rho+d})$
acts on $W$ as a Leonard system.
Using
 Lemma  \ref{lemma:tildeeigenvalue}
     one checks that $\Phi$ has
eigenvalue sequence 
(\ref{eq:WeigenAtilde}) and
dual eigenvalue seqence 
(\ref{eq:WeigenAtildestar}).
Consequently 
$\Phi$ 
has $q$-Racah type.
\end{proof}


Given $\psi \in \Psi$, let $W$ denote an irreducible 
$T$-module of type
$\psi$. Then 
the  endpoint, dual endpoint, and diameter of $W$  are
independent of the choice of $W$, and 
depend only on $\psi$.   
Assume that $W$ is thin.
Then the scalars
$a(W)$, $b(W)$ from 
Lemma \ref{lemma:moduleigenvalue}  are independent
of the choice of $W$, and
depend only on $\psi$. Sometimes we write
$a(W)=a(\psi)$ and
$b(W)=b(\psi)$.
Sometimes we write $d(\psi)$ for
the diameter of $W$.

\begin{definition}
\label{def:boldabd}
\rm
Assume that each irreducible $T$-module is thin. Define
\begin{eqnarray*}
 \ba = \sum_{\psi \in \Psi} a(\psi)e_\psi,
 \qquad \qquad
 \bb = \sum_{\psi \in \Psi} b(\psi)e_\psi,
 \qquad \qquad 
 \Lambda = \sum_{\psi \in \Psi}   q^{d(\psi)+1} e_\psi.
 \end{eqnarray*}
\end{definition}

\begin{lemma} 
\label{def:boldabdinv}
Assume that each irreducible $T$-module is thin.
Then the matrices
$\ba$, $\bb$, $\Lambda $ from Lemma
{\rm \ref{def:boldabd}} are invertible.
Their inverses are
\begin{eqnarray*}
 \ba^{-1} = \sum_{\psi \in \Psi} a(\psi)^{-1}e_\psi,
 \quad \qquad
 \bb^{-1} = \sum_{\psi \in \Psi} b(\psi)^{-1}e_\psi,
 \quad \qquad 
 \Lambda^{-1} = \sum_{\psi \in \Psi}   q^{-d(\psi)-1} e_\psi.
 \end{eqnarray*}
\end{lemma}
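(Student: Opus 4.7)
The plan is to verify invertibility by exhibiting the candidate inverses and checking the defining relation directly, using only the orthogonality properties of the central idempotents $\{e_\psi\}_{\psi \in \Psi}$ recalled at the end of Section 4.

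First I would check that for every $\psi \in \Psi$ the scalars $a(\psi)$, $b(\psi)$, and $q^{d(\psi)+1}$ are nonzero. For $a(\psi) = a q^{2\tau+d-D}$ and $b(\psi) = b q^{2\rho+d-D}$, this uses that $q \ne 0$ together with $a,b \ne 0$, which in turn follows from Definition \ref{def:QRacah} (where $u,v,u^*,v^*$ are nonzero) via $a^2 = u/v$, $b^2 = u^*/v^*$. For $\Lambda$, it uses $q \ne 0$. So the scalars appearing on the right-hand sides of the proposed formulas all make sense.

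Next I would carry out the short computation
\begin{equation*}
\Bigl(\sum_{\psi \in \Psi} a(\psi) e_\psi\Bigr)\Bigl(\sum_{\lambda \in \Psi} a(\lambda)^{-1} e_\lambda\Bigr) = \sum_{\psi,\lambda \in \Psi} a(\psi)a(\lambda)^{-1} e_\psi e_\lambda = \sum_{\psi \in \Psi} e_\psi = I,
\end{equation*}
where I use $e_\psi e_\lambda = \delta_{\psi,\lambda} e_\psi$ in the middle step and $\sum_{\psi \in \Psi} e_\psi = I$ at the end (both recalled just before Section 5). The same calculation with $a,a^{-1}$ replaced by $b,b^{-1}$, and then by $q^{d(\psi)+1}, q^{-d(\psi)-1}$, verifies the claims for $\bb$ and $\Lambda$. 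Multiplication in the reverse order yields $I$ by the same argument, so each proposed inverse is a two-sided inverse.

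There is no real obstacle here; the statement is essentially a restatement of the fact that, relative to the basis $\{e_\psi\}_{\psi \in \Psi}$ of the center of $T$, each of $\ba$, $\bb$, $\Lambda$ is represented by a ``diagonal'' element whose entries are all nonzero, and such elements are invertible entrywise. The only thing to be careful about is confirming nonvanishing of the scalars $a(\psi), b(\psi)$, which reduces to the nonvanishing hypothesis on $u,v,u^*,v^*$ in the definition of $q$-Racah type.
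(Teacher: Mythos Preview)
Your argument is correct and is exactly what the paper's one-line proof (``By Definition \ref{def:boldabd} and the discussion below (\ref{eq:decompose})'') is pointing to: the idempotent relations $e_\psi e_\lambda = \delta_{\psi,\lambda} e_\psi$ and $\sum_{\psi} e_\psi = I$ immediately give the claimed inverses once the scalars are known to be nonzero. You have simply unpacked this in full detail, including the check that $a(\psi)$, $b(\psi)$, $q^{d(\psi)+1}$ are nonzero.
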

\begin{proof} By Definition \ref{def:boldabd} and the discussion
below (\ref{eq:decompose}).
\end{proof}

\begin{definition}                         \label{def:cw}
Let $W$ denote a thin irreducible $T$-module. 
By Lemma \ref{lemma:moduleigenvalue} the sequence 
$(\cA, \{E_i\}_{i=\tau}^{\tau + d},$ $\cB, \{E_i^*\}_{i=\rho}^{\rho+d})$
acts on $W$ as a Leonard system of $q$-Racah type.
Let $c=c(W)$ denote the corresponding scalar from Section \ref{sec:LP}.
The scalar $c$ is defined up to reciprocal.
We sometimes write $c=c(\psi)$, where $\psi$ denotes the type of $W$.
\end{definition}

\begin{definition} 
\label{def:boldc}
\rm Assume that each irreducible $T$-module is thin.
Define
\begin{eqnarray*}
\bc = \sum_{\psi \in \Psi} c(\psi)e_\psi.
\end{eqnarray*}
\end{definition}

\begin{lemma}       \label{lemma:cinv}
Assume that each irreducible $T$-module is thin.
Then the matrix $\bc$ from Definition {\rm \ref{def:boldc}} is invertible. 
The inverse is
\begin{eqnarray*}
\bc^{-1} = \sum_{\psi \in \Psi} c(\psi)^{-1}e_\psi.
\end{eqnarray*}
\end{lemma}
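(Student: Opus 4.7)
The plan is to mimic the proof of Lemma \ref{def:boldabdinv} almost verbatim, since the only real content needed is that $c(\psi)$ is nonzero for every $\psi \in \Psi$ (so that the formal inverse $\bc^{-1} = \sum_{\psi} c(\psi)^{-1} e_\psi$ makes sense), combined with the orthogonality relations for the central idempotents $\{e_\psi\}_{\psi \in \Psi}$ recalled below the decomposition (\ref{eq:decompose}).

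First I would verify that $c(\psi) \neq 0$ for each $\psi \in \Psi$. Pick an irreducible $T$-module $W$ of type $\psi$; since every irreducible $T$-module is thin by assumption, by Lemma \ref{lemma:moduleigenvalue} the sequence $(\cA, \{E_i\}_{i=\tau}^{\tau+d}, \cB, \{E_i^*\}_{i=\rho}^{\rho+d})$ acts on $W$ as a Leonard system of $q$-Racah type. Then $c=c(W)=c(\psi)$ is by construction a root of the quadratic $\xi^{2}-\kappa\xi+1=0$ from (\ref{eq:kappa}); since the constant term of this polynomial is $1$, both roots are nonzero, so in particular $c(\psi)\neq 0$. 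Hence the scalar $c(\psi)^{-1}$ is well-defined for each $\psi$, and so is the matrix $\sum_{\psi \in \Psi} c(\psi)^{-1}e_\psi$.

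Next I would carry out the direct computation. Using the relations $e_\psi e_\lambda=\delta_{\psi,\lambda}e_\psi$ for $\psi,\lambda \in \Psi$ and $I = \sum_{\psi \in \Psi} e_\psi$, one gets
\begin{eqnarray*}
\bc \,\Bigl(\sum_{\psi \in \Psi} c(\psi)^{-1} e_\psi\Bigr)
  &=& \Bigl(\sum_{\psi \in \Psi} c(\psi) e_\psi\Bigr)\Bigl(\sum_{\lambda \in \Psi} c(\lambda)^{-1} e_\lambda\Bigr) \\
  &=& \sum_{\psi,\lambda \in \Psi} c(\psi)c(\lambda)^{-1}\,\delta_{\psi,\lambda}\,e_\psi
  \;=\; \sum_{\psi \in \Psi} e_\psi \;=\; I,
\end{eqnarray*}
and symmetrically the product in the opposite order is also $I$. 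This shows $\bc$ is invertible with the claimed inverse.

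There is essentially no obstacle here. The only point that is not formal bookkeeping is the nonvanishing of $c(\psi)$, which is already guaranteed by the remark following (\ref{eq:kappa}) (the equation $\xi^2-\kappa\xi+1=0$ has constant term $1$, so no root is zero). Everything else is a mechanical consequence of the orthogonal idempotent structure of $\{e_\psi\}_{\psi\in\Psi}$, exactly paralleling the proof already given for $\ba$, $\bb$, and $\Lambda$ in Lemma \ref{def:boldabdinv}.
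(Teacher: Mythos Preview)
Your proof is correct and follows essentially the same approach as the paper, which simply cites Definition~\ref{def:boldc} and the idempotent relations below~(\ref{eq:decompose}); you have just unpacked those references explicitly and added the (already noted) observation that $c(\psi)\neq 0$ from the remark after~(\ref{eq:kappa}).
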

\begin{proof}
By Definition \ref{def:boldc} and the discussion below (\ref{eq:decompose}).
\end{proof}

\begin{lemma}                              \label{lemma:abc}
Assume that each irreducible $T$-module is thin. 
Recall the matrices $\ba,\bb,\Lambda$ from Definition {\rm \ref{def:boldabd}}
and the matrix $\bc$ from Definition {\rm \ref{def:boldc}}.
Then 
$\ba^{\pm 1}$, $\bb^{\pm 1}$, $\bc^{\pm 1}$, $\Lambda^{\pm 1}$ 
act on each irreducible $T$-module $W$ as 
$$a(\psi)^{\pm 1}I, \ \ \ \ b(\psi)^{\pm 1}I, \ \ \ \ c(\psi)^{\pm 1}I,
\ \ \ \  q^{\pm d(\psi)\pm1}I,$$
respectively, where $\psi$ is the type of $W$.
\end{lemma}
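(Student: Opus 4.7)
The plan is to unpack the definitions and exploit the fact that the central idempotents $\{e_\psi\}_{\psi \in \Psi}$ act on irreducible $T$-modules in the most transparent possible way. Concretely, let $W$ denote an irreducible $T$-module and let $\psi \in \Psi$ denote its isomorphism type. By the definition of $V_\psi$ as the span of the irreducible $T$-submodules of type $\psi$, we have $W \subseteq V_\psi$. Recalling from the discussion below \eqref{eq:decompose} that $e_\psi$ acts as the identity on $V_\psi$ and as zero on $V_\lambda$ for every $\lambda \ne \psi$, I obtain $e_\psi w = w$ and $e_\lambda w = 0$ for all $w \in W$ and all $\lambda \ne \psi$.

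Next I apply this observation termwise to the four defining sums. For $\ba = \sum_{\lambda \in \Psi} a(\lambda) e_\lambda$, every term with $\lambda \ne \psi$ annihilates $W$, while the term with $\lambda = \psi$ restricts to $a(\psi) I$ on $W$; thus $\ba$ acts on $W$ as $a(\psi) I$. The same argument, applied to the sums in Definition \ref{def:boldabd} and Definition \ref{def:boldc}, shows that $\bb$, $\bc$, $\Lambda$ act on $W$ as $b(\psi) I$, $c(\psi) I$, $q^{d(\psi)+1} I$, respectively.

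For the inverses I would simply repeat the argument using the explicit formulas for $\ba^{-1}, \bb^{-1}, \Lambda^{-1}$ provided by Lemma \ref{def:boldabdinv} and for $\bc^{-1}$ provided by Lemma \ref{lemma:cinv}: each is again of the form $\sum_{\lambda \in \Psi} \alpha(\lambda) e_\lambda$, so the same collapse yields the desired scalar $a(\psi)^{-1} I$, $b(\psi)^{-1} I$, $c(\psi)^{-1} I$, $q^{-d(\psi)-1} I$ on $W$. There is no real obstacle here; the statement is essentially bookkeeping, and the only point that needs any care is to cite correctly that the scalars $a(\psi), b(\psi), c(\psi), d(\psi)$ are well-defined (independent of the choice of $W$ within its isomorphism class), which was already established in the discussion just before Definition \ref{def:boldabd} and in Definition \ref{def:cw}.
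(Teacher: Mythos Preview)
Your proposal is correct and follows exactly the same route as the paper, which simply writes ``Follows from the discussion below \eqref{eq:decompose}.'' You have just unpacked that one-line reference into the explicit termwise argument, so there is nothing to add or correct.
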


\begin{proof}
Follows from the discussion below (\ref{eq:decompose}).
\end{proof}

\begin{lemma}                              \label{lemma:abccentral}
Each of the matrices $\ba^{\pm 1}$, $\bb^{\pm 1}$, $\bc^{\pm 1}$,
$\Lambda^{\pm 1}$ is a central element of $T$.
\end{lemma}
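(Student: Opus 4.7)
The plan is to reduce everything to the single observation, recorded at the end of Section~4, that the idempotents $\{e_\psi\}_{\psi\in\Psi}$ form a basis for the center of $T$ (Wedderburn theory). Since the center of an algebra is closed under $\C$-linear combinations, every $\C$-linear combination of the $e_\psi$ is automatically a central element of $T$.

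First I would appeal to Definition~\ref{def:boldabd} to write $\ba$, $\bb$, $\Lambda$ explicitly as $\C$-linear combinations of the $e_\psi$, with scalar coefficients $a(\psi)$, $b(\psi)$, $q^{d(\psi)+1}$, respectively; similarly, Definition~\ref{def:boldc} expresses $\bc$ as a $\C$-linear combination of the $e_\psi$. Each of these four matrices therefore lies in the center of $T$. For the inverses, Lemma~\ref{def:boldabdinv} and Lemma~\ref{lemma:cinv} already give the explicit expansions $\ba^{-1}=\sum_\psi a(\psi)^{-1}e_\psi$, $\bb^{-1}=\sum_\psi b(\psi)^{-1}e_\psi$, $\bc^{-1}=\sum_\psi c(\psi)^{-1}e_\psi$ and $\Lambda^{-1}=\sum_\psi q^{-d(\psi)-1}e_\psi$, so the same argument applies to them.

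There is no real obstacle here; the only point that deserves a sentence is that the scalars $a(\psi), b(\psi), c(\psi)$, and $q^{d(\psi)+1}$ are well defined and nonzero (ensuring invertibility), which was already settled in Lemmas~\ref{def:boldabdinv} and~\ref{lemma:cinv}. Thus the proof will be a short two- or three-line argument: each of the eight matrices is a $\C$-linear combination of $\{e_\psi\}_{\psi\in\Psi}$, and this basis spans $Z(T)$, so each lies in $Z(T)\subseteq T$, as required.
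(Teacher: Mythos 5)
Your proposal is correct and follows essentially the same route as the paper: the published proof likewise invokes the discussion at the end of Section~4 (that the $e_\psi$ are central in $T$, indeed a basis for its center by Wedderburn theory) and then cites Definitions~\ref{def:boldabd} and~\ref{def:boldc}, with the inverse expansions supplied by Lemmas~\ref{def:boldabdinv} and~\ref{lemma:cinv}, exactly as you do. No gaps; your remark on invertibility is a harmless extra sentence.
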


\begin{proof}
By our discussion at the end of Section 4,
each of
$\lbrace e_\psi\rbrace_{\psi \in \Psi}$ is a central
element of $T$. The result follows in view of
 Definition
\ref{def:boldabd} and
 Definition
\ref{def:boldc}.
\end{proof}

\begin{theorem}                              \label{defineC}
Assume that each irreducible $T$-module is thin. Then there exists a unique 
$\cC \in T$ such that
\begin{eqnarray}
\cA + \frac{q\;\cB \cC-q^{-1}\cC \cB}{q^2-q^{-2}} &=&  
\frac{(\ba+\ba^{-1})(\Lambda+\Lambda^{-1})+(\bb+\bb^{-1})(\bc+\bc^{-1})}{q+q^{-1}},
       \label{eq:AWDRG1}\\
\cB + \frac{q\;\cC \cA -q^{-1}\cA \cC}{q^2-q^{-2}} &=& 
\frac{(\bb+\bb^{-1})(\Lambda+\Lambda^{-1})+(\bc+\bc^{-1})(\ba+\ba^{-1})}{q+q^{-1}},
       \label{eq:AWDRG2}\\
\cC +\frac{q\;\cA\cB-q^{-1}\cB\cA}{q^2-q^{-2}} &=& 
\frac{(\bc+\bc^{-1})(\Lambda+\Lambda^{-1})+(\ba+\ba^{-1})(\bb+\bb^{-1})}{q+q^{-1}}. 
       \label{eq:AWDRG3}
\end{eqnarray}
\end{theorem}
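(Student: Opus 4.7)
The plan is to apply Huang's $\mathbb{Z}_3$-symmetric Askey-Wilson theorem (Theorem \ref{thm:AskeyWilsonZ3}) locally on each irreducible $T$-module and then assemble the local data into a single element of $T$. The cleanest route is to use (\ref{eq:AWDRG3}) directly as a \emph{definition} of $\mathcal{C}$, which makes membership in $T$ automatic, and then derive (\ref{eq:AWDRG1}) and (\ref{eq:AWDRG2}) module-by-module.

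For existence, I would define
\[
\cC := \frac{(\bc+\bc^{-1})(\Lambda+\Lambda^{-1})+(\ba+\ba^{-1})(\bb+\bb^{-1})}{q+q^{-1}} \; - \; \frac{q\cA\cB-q^{-1}\cB\cA}{q^2-q^{-2}},
\]
so that (\ref{eq:AWDRG3}) holds by construction. Because $\cA,\cB$ lie in $T$ by (\ref{eq:defcAcB}), and because each of $\ba^{\pm 1},\bb^{\pm 1},\bc^{\pm 1},\Lambda^{\pm 1}$ is central in $T$ by Lemma \ref{lemma:abccentral}, the displayed formula indeed produces an element of $T$. To obtain (\ref{eq:AWDRG1}) and (\ref{eq:AWDRG2}), I restrict to an arbitrary irreducible $T$-module $W$ of type $\psi$. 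By Lemma \ref{lemma:moduleigenvalue}, $(\cA,\cB)$ acts on $W$ as a Leonard pair of $q$-Racah type with parameters $a(\psi),b(\psi)$, diameter $d(\psi)$, and associated scalar $c(\psi)$ from Definition \ref{def:cw}. By Lemma \ref{lemma:abc}, on $W$ the matrices $\ba,\bb,\bc,\Lambda$ act as the scalars $a(\psi),b(\psi),c(\psi),q^{d(\psi)+1}$, so the right-hand sides of (\ref{eq:AWDRG1})--(\ref{eq:AWDRG3}) restricted to $W$ match exactly the right-hand sides of (\ref{eq:AWsim1})--(\ref{eq:AWsim3}) applied to this local Leonard pair. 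Theorem \ref{thm:AskeyWilsonZ3} produces a unique $A^\eps \in {\rm End}(W)$ satisfying all three of (\ref{eq:AWsim1})--(\ref{eq:AWsim3}); since the analogue of (\ref{eq:AWsim3}) on $W$ is a linear equation that solves explicitly for $A^\eps$, comparison with the defining formula for $\cC$ shows $\cC|_W = A^\eps$. Hence $\cC|_W$ satisfies (\ref{eq:AWDRG1}) and (\ref{eq:AWDRG2}) as well. Since $V$ decomposes as an orthogonal direct sum of irreducible $T$-modules, (\ref{eq:AWDRG1}) and (\ref{eq:AWDRG2}) hold globally in $\Mat_X(\C)$.

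Uniqueness is immediate from the shape of (\ref{eq:AWDRG3}): that relation expresses $\cC$ as a fixed element of $T$ minus $(q\cA\cB-q^{-1}\cB\cA)/(q^2-q^{-2})$, so any $\cC' \in T$ satisfying (\ref{eq:AWDRG3}) must equal $\cC$. The only delicate point in the argument is recognizing that although Theorem \ref{thm:AskeyWilsonZ3} only guarantees existence of $A^\eps$ inside each ${\rm End}(W)$ separately, the explicit expression for $\cC$ built from $\cA,\cB$ and central elements of $T$ automatically yields a single matrix in $T$ whose restriction to every thin irreducible $T$-module coincides with the locally-defined $A^\eps$; this is precisely the step where the $q$-Racah structure of $\G$ (uniform parameters $a,b,w,\ldots$) combines with thinness (Leonard pair structure on each $W$) to produce a global object.
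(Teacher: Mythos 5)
Your proposal is correct and takes essentially the same approach as the paper's proof: define $\cC$ by rearranging (\ref{eq:AWDRG3}), note $\cC \in T$ because $T$ contains $\cA,\cB$ and the central elements $\ba^{\pm 1},\bb^{\pm 1},\bc^{\pm 1},\Lambda^{\pm 1}$, and verify (\ref{eq:AWDRG1}), (\ref{eq:AWDRG2}) on each irreducible $T$-module using Theorem \ref{thm:AskeyWilsonZ3}. Your write-up simply makes explicit the details the paper leaves implicit, namely the identification $\cC|_W = A^\eps$ via the scalar action of $\ba,\bb,\bc,\Lambda$ from Lemma \ref{lemma:abc} and the uniqueness of $\cC$ forced by (\ref{eq:AWDRG3}).
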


\begin{proof} Define $\cC$ such that
       (\ref{eq:AWDRG3}) holds.
Then $\cC \in T$ since $T$ contains
$\cA$, $\cB$  as well as
$\ba^{\pm 1}$, $\bb^{\pm 1}$, $\bc^{\pm 1}$, $\Lambda^{\pm 1}$.
We now verify 
       (\ref{eq:AWDRG1}), 
       (\ref{eq:AWDRG2}).
To do this, it suffices to show that
       (\ref{eq:AWDRG1}), 
       (\ref{eq:AWDRG2}) hold on each 
irreducible $T$-module. But this follows from
the construction
and
Theorem  \ref{thm:AskeyWilsonZ3}.
%
\end{proof}

\begin{theorem}                              \label{central}
Assume that each irreducible $T$-module is thin.
Then each of 
\begin{eqnarray}               \label{eq:centralABC}
\cA + \frac{q\;\cB \cC-q^{-1}\cC \cB}{q^2-q^{-2}}, \ \ \ \
\cB + \frac{q\;\cC \cA -q^{-1}\cA \cC}{q^2-q^{-2}}, \ \ \ \
\cC + \frac{q\;\cA\cB-q^{-1}\cB\cA}{q^2-q^{-2}}
\end{eqnarray}
is central in $T$. The matrices  $\cA,\cB$ are from {\rm (\ref{eq:defcAcB})} 
and $\cC$ is from Theorem {\rm \ref{defineC}}.
\end{theorem}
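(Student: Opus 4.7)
The plan is to observe that Theorem \ref{defineC} has already done essentially all the work: the three expressions in (\ref{eq:centralABC}) are, respectively, the left-hand sides of (\ref{eq:AWDRG1}), (\ref{eq:AWDRG2}), (\ref{eq:AWDRG3}), and hence equal the corresponding right-hand sides. So the task reduces to showing that those right-hand sides are central in $T$.

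The right-hand sides are polynomial expressions (sums, products, and scalar multiples) in the matrices $\ba^{\pm 1}$, $\bb^{\pm 1}$, $\bc^{\pm 1}$, $\Lambda^{\pm 1}$. By Lemma \ref{lemma:abccentral}, each of these matrices lies in the center of $T$. Since the center of any algebra is closed under sums and products, every polynomial expression in central elements is central. Applying this to the right-hand sides of (\ref{eq:AWDRG1})--(\ref{eq:AWDRG3}) gives the claim.

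There is no real obstacle here: the theorem is an immediate corollary of Theorem \ref{defineC} and Lemma \ref{lemma:abccentral}. The one point worth mentioning explicitly in the write-up is that $\cC$ was \emph{defined} in Theorem \ref{defineC} so as to make (\ref{eq:AWDRG3}) hold, and then (\ref{eq:AWDRG1}) and (\ref{eq:AWDRG2}) were verified by checking them on each thin irreducible $T$-module via the Askey--Wilson relations of Theorem \ref{thm:AskeyWilsonZ3}. Thus all three identities (\ref{eq:AWDRG1})--(\ref{eq:AWDRG3}) are available, and each expression in (\ref{eq:centralABC}) can be replaced by a manifestly central expression.
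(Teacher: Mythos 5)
Your proposal is correct and matches the paper's own proof: both deduce centrality by replacing each expression in (\ref{eq:centralABC}) with the corresponding right-hand side of (\ref{eq:AWDRG1})--(\ref{eq:AWDRG3}) and invoking Lemma \ref{lemma:abccentral}. You merely spell out the (routine) fact that sums and products of central elements are central, which the paper leaves implicit.
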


\begin{proof}
In 
each equation (\ref{eq:AWDRG1})--(\ref{eq:AWDRG3}), 
the matrix on the right is a central element of $T$
by 
Lemma \ref{lemma:abccentral}. 
\end{proof}

\begin{theorem}
\label{thm:hom}
Assume that each irreducible $T$-module is thin. 
Then there exists a surjective $\C$-algebra homomorphism 
$\Delta_q \to T$ that sends
$$
A \mapsto \cA, \quad \ \ \ \ B \mapsto \cB, \quad \ \  \ \ C \mapsto \cC.
$$
The matrices  $\cA,\cB$ are from {\rm (\ref{eq:defcAcB})} 
and $\cC$ is from Theorem {\rm \ref{defineC}}.
\end{theorem}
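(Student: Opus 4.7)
The plan is to exploit the universal property of $\Delta_q$. Since $\Delta_q$ is defined by generators $A,B,C$ together with the relations that the three expressions in (\ref{def:AWalgebra}) are central, to produce a $\C$-algebra homomorphism $\Delta_q \to T$ sending $A \mapsto \cA$, $B \mapsto \cB$, $C \mapsto \cC$, it suffices to verify that the three corresponding expressions in $\cA,\cB,\cC$ are central in $T$. This is precisely what Theorem \ref{central} provides.

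More explicitly, I would first recall that the tensor algebra $\C\langle A,B,C\rangle$ admits a unique $\C$-algebra homomorphism $\widetilde{\varphi}$ to $T$ sending $A, B, C$ to $\cA, \cB, \cC$. The algebra $\Delta_q$ is the quotient of $\C\langle A,B,C\rangle$ by the two-sided ideal $\mathcal{I}$ generated by the commutators of each of the three expressions in (\ref{def:AWalgebra}) with each of $A,B,C$. To factor $\widetilde{\varphi}$ through $\Delta_q$, I need $\widetilde{\varphi}(\mathcal{I})=0$, i.e.\ that the image of each generator of $\mathcal{I}$ vanishes in $T$. But each such generator is a commutator $[X_i,Y]$ where $X_i$ is one of the three expressions in (\ref{def:AWalgebra}) and $Y \in \{A,B,C\}$; its image under $\widetilde\varphi$ is the commutator of the corresponding element of (\ref{eq:centralABC}) with one of $\cA,\cB,\cC$, and these vanish by Theorem~\ref{central}. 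Hence $\widetilde{\varphi}$ descends to the desired homomorphism $\varphi\colon \Delta_q \to T$.

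For surjectivity, I would invoke the observation made just after Lemma~\ref{lemma:tildeeigenvalue} (which follows immediately from (\ref{eq:defcAcB}) and the fact that $T$ is generated by $A$ and $A^*$): the algebra $T$ is generated by $\cA$ and $\cB$. Since the image of $\varphi$ contains both $\varphi(A)=\cA$ and $\varphi(B)=\cB$, it contains their generated subalgebra, which is all of $T$. Therefore $\varphi$ is surjective.

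There is no serious obstacle here; the entire argument is just the universal property of $\Delta_q$ combined with the two inputs already established, namely Theorem~\ref{central} (for the existence of $\varphi$) and the fact that $\cA,\cB$ generate $T$ (for surjectivity). The substantive work was already done in setting up $\cC$ via Theorem~\ref{defineC} and checking the $\Z_3$-symmetric Askey--Wilson relations hold module-by-module via Theorem~\ref{thm:AskeyWilsonZ3}.
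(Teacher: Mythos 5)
Your proposal is correct and follows essentially the same route as the paper: the paper's proof is precisely ``By Theorem \ref{central} and since $T$ is generated by $\cA$, $\cB$,'' and your argument merely spells out the implicit use of the universal property of $\Delta_q$ (presenting it as a quotient of the free algebra by the ideal generated by the relevant commutators) that the paper leaves tacit. No gaps; the added detail is a faithful expansion of the paper's one-line proof.
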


\begin{proof}
By Theorem \ref{central} and since $T$ is generated by  $\cA$, $\cB$. 
\end{proof}

\begin{theorem}                    \label{thm:moduleaction}        
Assume that each irreducible $T$-module is thin. 
Then the standard module $V$ becomes a $\Delta_q$-module on which
the $\Delta_q$-generators $A,B,C$ act as follows:
\medskip

\begin{center}
\begin{tabular}{l|ccc}
$\Delta_q$-generator & $A$ & $B$ & $C$ \\
\hline
action on $V$ & $\cA$ & $\cB$ & $\cC$
\end{tabular}
\end{center}
\medskip

\noindent
The matrices  $\cA,\cB$ are from {\rm (\ref{eq:defcAcB})} 
and $\cC$ is from Theorem {\rm \ref{defineC}}.
\end{theorem}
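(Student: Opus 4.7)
The plan is to derive this theorem as an essentially immediate consequence of Theorem \ref{thm:hom}. Recall that by construction the standard module $V = \C^X$ carries a natural $T$-module structure via left multiplication by matrices in $T \subseteq \Mat_X(\C)$. The strategy is to pull this $T$-module structure back along the surjective $\C$-algebra homomorphism $\pi: \Delta_q \to T$ furnished by Theorem \ref{thm:hom}.

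Concretely, I would proceed as follows. First, I would define the action of $\Delta_q$ on $V$ by the rule $X \cdot v = \pi(X) v$ for $X \in \Delta_q$ and $v \in V$. Since $\pi$ is a $\C$-algebra homomorphism and $V$ is already a $T$-module, this rule satisfies $(XY) \cdot v = \pi(XY) v = \pi(X)\pi(Y)v = X \cdot (Y \cdot v)$ and $1_{\Delta_q} \cdot v = \pi(1_{\Delta_q}) v = Iv = v$, together with the usual bilinearity over $\C$; hence $V$ inherits the structure of a $\Delta_q$-module. Second, I would read off the action of the generators from the formulas in Theorem \ref{thm:hom}: since $\pi(A) = \cA$, $\pi(B) = \cB$, $\pi(C) = \cC$, the $\Delta_q$-generators $A, B, C$ act on $V$ as $\cA, \cB, \cC$ respectively, giving exactly the table in the statement.

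There is no real obstacle here, since the substantive content — namely, that $\cA, \cB, \cC$ satisfy the defining relations of $\Delta_q$ — has already been verified in Theorem \ref{central} and packaged into the homomorphism of Theorem \ref{thm:hom}. The only thing to do is to invoke the standard fact that an $R$-module pulls back to an $S$-module along any algebra homomorphism $S \to R$, and then identify the generator action. Accordingly, the write-up can be quite short: one sentence introducing the pullback and one sentence citing Theorem \ref{thm:hom} suffice.
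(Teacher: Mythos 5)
Your proposal is correct and matches the paper's own proof, which likewise pulls back the $T$-module structure on $V$ along the homomorphism $\Delta_q \to T$ from Theorem \ref{thm:hom}; your explicit verification of the module axioms and the identification of the generator actions are just the routine details the paper leaves implicit.
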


\begin{proof}
Pull back the $T$-module structure on $V$, using
the homomorphism $\Delta_q \to T$ from Theorem
\ref{thm:hom}.
\end{proof}

\section{2-Homogeneous bipartite distance-regular graphs}

In this section we  work out an example.
Throughout this section we assume that $\G$ is bipartite and 
$2$-homogeneous in the sense of Nomura \cite{Nomura},
but not a hypercube.
For a thorough description of such $\G$, see \cite{Curtin1}, \cite{Curtin2}.
By \cite[Theorem 42]{Curtin1} $\G$ is $Q$-polynomial.
Let $\{E_i\}_{i=1}^D$ denote a $Q$-polynomial ordering 
of the nontrivial primitive idempotents of $\G$. 
Let $\{\theta_i\}_{i=0}^D$ and $\{\theta_i^*\}_{i=0}^D$ denote
the corresponding eigenvalue and dual eigenvalue sequences.
The following lemma summarizes the properties of $\G$ that we need.

\begin{lemma}{\rm \cite[Corollary~43]{Curtin1}.}    \label{thm:2homproperties}
The following {\rm (i), (ii)} hold.
\begin{itemize}
\item[{\rm (i)}] There exists a nonzero  $q \in \C$ such that $q^4 \ne  1$ and
\begin{equation}                   \label{eq_2hom_eigen}
\theta_i=\frac{q^{D-2}+q^{2-D}}{q^2-q^{-2}}(q^{D-2i}-q^{2i-D})
\ \ \ \ \ \ \ \ \ \ (0 \le i \le D).
\end{equation}
\item[{\rm (ii)}] $\theta_i^*=\theta_i  \ \ \ \   (0 \le i \le D).$
\end{itemize}
\end{lemma}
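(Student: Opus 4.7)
The plan is essentially to quote Curtin's structure theorem, since the stated lemma is \cite[Corollary~43]{Curtin1}. For completeness let me sketch how one extracts it from the underlying theory. First, for part~(i), I would start from Nomura's 2-homogeneity definition together with the bipartiteness assumption. Nomura's condition demands that certain triple-intersection cardinalities $|\Gamma(u)\cap\Gamma(v)\cap\Gamma_i(w)|$ depend only on coarse distance data, and for a bipartite distance-regular graph this translates into a tight system of recursions on the intersection numbers $b_i,c_i$. Excluding the hypercube, Curtin shows this system has a one-parameter family of solutions indexed by a scalar $q$ with $q^4\neq 1$, encoded for instance in the ratio of consecutive $c_i$'s.

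With the $b_i,c_i$ in hand as explicit $q$-rational expressions, the eigenvalues $\theta_i$ follow from the standard three-term recurrence for the orthogonal polynomials $u_i$ satisfying $A_i = u_i(A)$: solving $c_{i+1}u_{i+1}(\theta)+(a_i-\theta)u_i(\theta)+b_{i-1}u_{i-1}(\theta)=0$ at the $D+1$ roots of the minimal polynomial of $A$ and simplifying yields precisely (\ref{eq_2hom_eigen}).

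For part~(ii), I would invoke formal self-duality: Curtin establishes (\cite[Theorem~42]{Curtin1}) that for a 2-homogeneous bipartite distance-regular graph $\Gamma$ (not a hypercube), the Krein parameters coincide with the intersection numbers under the correspondence $E_i \leftrightarrow A_i$ coming from the chosen $Q$-polynomial structure. This identification forces the dual eigenvalue sequence and the ordinary eigenvalue sequence to agree term by term.

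The main obstacle---and the reason the lemma is quoted rather than reproved here---is Curtin's $q$-parameterization of the intersection array in step one; it relies on a delicate analysis of the combinatorial balance imposed by 2-homogeneity on the bipartite double cover structure. Every other step, namely extracting the eigenvalues from the $b_i,c_i$ and reading off $\theta_i^*=\theta_i$ from formal self-duality, becomes a routine computation once those explicit formulas are available.
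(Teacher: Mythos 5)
The paper gives no proof of this lemma at all---it is stated purely as a citation to \cite[Corollary~43]{Curtin1}---and your proposal does exactly the same thing, quoting Curtin's result and adding only an expository sketch of where it comes from. That matches the paper's treatment, so your proposal is fine as it stands.
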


\begin{corollary}                          \label{thm:2hom_qracah}
The above  $Q$-polynomial structure has $q$-Racah type,
      with $w=w^*=0$ and 
      \begin{equation}             \label{eq:calculateuv}
      v=v^*=-u=-u^*=\frac{q^{D-2}+q^{2-D}}{q^2-q^{-2}}.
      \end{equation}
\end{corollary}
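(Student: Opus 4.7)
The proof is essentially a direct comparison of coefficients against Definition \ref{def:QRacah}. The plan is to rewrite the closed form for $\theta_i$ from Lemma \ref{thm:2hom_qracah}(i) in the exponential shape $w+uq^{2i-D}+vq^{D-2i}$, read off the values of the six scalars, and then do the same for $\theta_i^*$.

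First I would introduce the abbreviation $\alpha = \frac{q^{D-2}+q^{2-D}}{q^2-q^{-2}}$, so that (\ref{eq_2hom_eigen}) becomes $\theta_i = \alpha q^{D-2i} - \alpha q^{2i-D}$ for $0 \le i \le D$. Matching this against $\theta_i = w + u q^{2i-D} + v q^{D-2i}$ forces $w=0$, $u = -\alpha$, $v = \alpha$. Since Lemma \ref{thm:2hom_qracah}(ii) asserts $\theta_i^* = \theta_i$, the same matching applied to $\theta_i^*$ gives $w^* = 0$, $u^* = -\alpha$, $v^* = \alpha$. These values are exactly (\ref{eq:calculateuv}).

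The one thing that is not purely formal is the nonvanishing requirement on $u,u^*,v,v^*$ imposed in Definition \ref{def:QRacah}. All four scalars equal $\pm \alpha$, so the requirement reduces to $\alpha \neq 0$, equivalently $q^{D-2}+q^{2-D} \neq 0$ (the denominator is already nonzero by the standing assumption $q^4 \neq 1$). This is the only substantive step, and it is where the standing assumption that $\G$ is a $2$-homogeneous bipartite distance-regular graph which is not a hypercube enters: under Lemma \ref{thm:2hom_qracah}, if $q^{D-2}+q^{2-D}=0$, then $\theta_i \equiv 0$ for all $i$, which would force $\G$ to be trivial and contradict $D\ge 3$; alternatively, this nonvanishing can be cited directly from the characterization of such graphs given in Curtin \cite{Curtin1} (see \cite[Corollary~43]{Curtin1}, where $q$ is explicitly constructed so that $\alpha$ is a nonzero eigenvalue of $\G$). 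I expect this nonvanishing check to be the only real obstacle; once it is in place, the corollary follows simply by reading off coefficients and invoking Definition \ref{def:QRacah}.
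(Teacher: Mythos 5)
Your proof is correct and takes essentially the same approach as the paper, which states the corollary without proof as an immediate coefficient comparison of (\ref{eq_2hom_eigen}) and Lemma \ref{thm:2homproperties}(ii) against Definition \ref{def:QRacah}. Your explicit check that $q^{D-2}+q^{2-D}\neq 0$ (sound, since the eigenvalues $\{\theta_i\}_{i=0}^D$ are mutually distinct by Section 3 and hence cannot all vanish) is the one substantive point, and the paper leaves it implicit.
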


\begin{note}
The scalar $q$ in \cite{Curtin2} corresponds to our $q^2$.
\end{note}

Recall the subconstituent algebra $T=T(x)$ from Section 3.
The irreducible $T$-modules are described
in the following lemma.

\begin{lemma}{\rm \cite[Theorem 4.1, Lemma 4.2]{Curtin2}.}    \label{thm_2hompmodules}
Let $W$ denote an irreducible $T$-module, with endpoint  $\rho$. 
Then the following {\rm (i)--(iv)} hold.
\begin{itemize}
\item[{\rm (i)}] $0 \le \rho \le \lfloor D/2 \rfloor$.
\item[{\rm (ii)}] Up to isomorphism, $W$ is the unique irreducible
                   $T$-module with endpoint $\rho$.
\item[{\rm (iii)}] $W$ is thin.    
\item[{\rm (iv)}] $W$ has dual endpoint $\rho$ and diameter $D-2\rho$.
\end{itemize}
\end{lemma}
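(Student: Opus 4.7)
The plan is to extract the four conclusions from the interplay of bipartiteness and Nomura's 2-homogeneity. First I would record two raising/lowering structures. Since $\G$ is bipartite we have $E_i^* A E_j^* = 0$ unless $|i-j|=1$, so $A$ shifts the $E^*$-grading by exactly one. From Lemma~\ref{thm:2homproperties}(i) one checks $\theta_{D-i} = -\theta_i$, and combined with $\theta_i^* = \theta_i$ from (ii) this makes the $Q$-polynomial structure $Q$-bipartite; dually, $E_i A^* E_j = 0$ unless $|i-j|=1$, so $A^*$ shifts the $E$-grading by exactly one. Applied to an irreducible $T$-module $W$ with endpoint $\rho$, dual endpoint $\tau$, and diameter $d$ (cf.\ Lemma~\ref{thin}), these two structures force all indices $i$ with $E_i^* W \ne 0$ to have a single parity, and likewise for $E_i W \ne 0$. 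Standard counting then yields $2\rho + d \le D$ and $2\tau + d \le D$, from which (i) follows immediately.

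For (iv) I would prove matching lower bounds $d \ge D - 2\rho$ and $\tau \le \rho$ by exhibiting an explicit $T$-submodule of diameter $D - 2\rho$ with endpoint $\rho$. The 2-homogeneous condition prescribes that certain triple intersection numbers for vertex pairs at distance~$2$ depend only on their distances from $x$; this allows one to pick a nonzero $w \in E_\rho^* V$ orthogonal to all $T$-modules of smaller endpoint and verify that the orbit $Tw$ realizes the extremal values. Together with the upper bounds above, this pins down $d = D - 2\rho$ and $\tau = \rho$. For (iii), the same triple-intersection uniformity forces each one-step raising map $E_{i+1}^* A E_i^* \colon E_i^* W \to E_{i+1}^* W$ to have rank at most one, propagating $\dim E_i^* W \le 1$ inductively from $\dim E_\rho^* W = 1$; dually $\dim E_i W \le 1$, so $W$ is thin.

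Finally (ii) follows from the theory of Leonard systems. By thinness and Lemma~\ref{lemma:moduleigenvalue}, the sequence $(\cA, \{E_i\}, \cB, \{E_i^*\})$ acts on $W$ as a Leonard system of $q$-Racah type whose parameter array is determined entirely by $\rho$ through the formulas of Corollary~\ref{thm:2hom_qracah} together with the 2-homogeneous expressions for the split sequences. Two Leonard systems with the same parameter array are isomorphic by \cite[Theorem 1.9]{TerwilligerTD}, and in the thin case this intertwiner extends to a $T$-module isomorphism via the cyclic generation of $W$ from any nonzero vector in $E_\rho^* W$. The main obstacle is converting the combinatorial 2-homogeneity hypothesis into concrete scalar formulas for the matrix entries of $A$ between $E_i^* W$ and $E_{i+1}^* W$; this requires the detailed triple-intersection-number calculus of \cite{Curtin1,Curtin2} and is the technical heart of the argument.
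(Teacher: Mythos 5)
First, a point of calibration: the paper does not prove this lemma at all --- it is quoted verbatim from \cite[Theorem 4.1, Lemma 4.2]{Curtin2}, so the standard your proposal must meet is Curtin's actual argument, not anything in this paper. Measured against that, your proposal has a genuine gap: it is an outline whose technical core is missing. For (iii) and (iv) you say the construction of the extremal submodule, the rank bounds on the raising maps, and the scalar formulas all ``require the detailed triple-intersection-number calculus of \cite{Curtin1,Curtin2}'' and call this the technical heart --- but that heart is precisely what a proof of the lemma consists of; deferring it to the papers being cited leaves nothing proved. Two subsidiary steps are also unjustified as stated: the induction base $\dim E_\rho^*W=1$ is not automatic for an irreducible $T$-module of endpoint $\rho>0$ (assuming it is close to assuming thinness), and the passage from Lemma \ref{thm:2homproperties} to $E_iA^*E_i=0$ (the $Q$-bipartite property) needs an argument via nonnegativity of the Krein parameters, not just the symmetry $\theta_{D-i}=-\theta_i$ combined with $\theta_i^*=\theta_i$.

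Moreover, one of your intermediate claims is concretely false: you assert that bipartiteness and dual bipartiteness ``force all indices $i$ with $E_i^*W\ne 0$ to have a single parity.'' This contradicts Lemma \ref{thin}(i), which says the $E^*$-support of an irreducible $T$-module is the full interval $\rho\le i\le \rho+d$; once $d\ge 1$ both parities occur. (The vanishing of the diagonal blocks $E_i^*AE_i^*$ makes $A$ act as a pure raising-plus-lowering map, but the module's support is still an interval.) Consequently the ``standard counting'' you invoke to get $2\rho+d\le D$, and hence part (i), has no valid starting point; in fact the inequality $2\rho+d\le D$ is itself part of what 2-homogeneity must be used to establish, as equality $d=D-2\rho$ in part (iv) shows. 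The one portion of your sketch that is essentially sound is the derivation of (ii) from (iii) and (iv): since $\G$ is bipartite, $a_h={\rm trace}(E_h^*A\vert_W)=0$, so by (\ref{eq:calculatevarphi})--(\ref{eq:calculatphi}) the parameter array of the Leonard system in Lemma \ref{lemma:moduleigenvalue} is determined by $\rho$ alone, and since $T$ is generated by $A,A^*$, an intertwiner of the Leonard pairs furnished by \cite[Theorem 1.9]{TerwilligerTD} is automatically a $T$-module isomorphism. But as this rests on the unproven parts (iii) and (iv), the proposal as a whole does not constitute a proof.
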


In Section 5 we displayed a $\Delta_q$ action on the standard module of $T$.
In this section we describe the $\Delta_q$ action in more detail.
Fix  $\bi \in \C$ such that $\bi^2=-1$.

\begin{lemma}              \label{step1}
Recall the scalars $a,b$ from above Lemma {\rm \ref{lemma:tildeeigenvalue}}. 
Then $a,b \in \{\bi,-\bi\}$. Moreover,
\begin{equation}             \label{eq:2homAB}
\cA= A \, \frac{q^2-q^{-2}}{a(q^{D-2}+q^{2-D})}, \ \ \ \ \ \ \ \ \
\cB=  A^* \, \frac{q^2-q^{-2}}{b(q^{D-2}+q^{2-D})}.
\end{equation}
\end{lemma}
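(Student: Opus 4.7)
The plan is to combine the explicit formulas from Corollary \ref{thm:2hom_qracah} with the definitions of $a,b$ and the matrices $\cA,\cB$ given in the passage just above Lemma \ref{lemma:tildeeigenvalue}. Recall that $a,b \in \C$ are defined by $a^2 = u/v$ and $b^2 = u^*/v^*$, while the $q$-Racah data $u,u^*,v,v^*,w,w^*$ were pinned down in Corollary \ref{thm:2hom_qracah} to satisfy $w = w^* = 0$ and $v = v^* = -u = -u^*$. Thus the first step is simply to read off $u/v = -1$ and $u^*/v^* = -1$, which forces $a^2 = b^2 = -1$ and hence $a,b \in \{\bi, -\bi\}$.

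For the displayed formulas, I would substitute $w = 0$ into the definition
\[
\cA = \frac{A - wI}{av} = \frac{A}{av},
\]
from (\ref{eq:defcAcB}), and then plug in the value of $v$ from (\ref{eq:calculateuv}), namely $v = (q^{D-2}+q^{2-D})/(q^2-q^{-2})$. This yields
\[
\cA = A \cdot \frac{q^2 - q^{-2}}{a(q^{D-2}+q^{2-D})},
\]
as required. The formula for $\cB$ is obtained in exactly the same way from $\cB = (A^* - w^*I)/(bv^*)$ using $w^* = 0$ and $v^* = v$.

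There is no real obstacle here; both claims are immediate consequences of the $q$-Racah parametrization established in Corollary \ref{thm:2hom_qracah} together with the definitions of $a$, $b$, $\cA$, $\cB$. The only point worth a sentence of comment is that the sign choice for $a$ (and for $b$) is not pinned down by $a^2 = -1$ alone, which is consistent with the fact that both $\cA$ and its negative arise from opposite choices of square root; the statement of the lemma correctly reflects this ambiguity by asserting only $a,b \in \{\bi,-\bi\}$ rather than a specific value.
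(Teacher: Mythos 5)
Your proof is correct and follows essentially the same route as the paper: read off $a^2=u/v=-1$ and $b^2=u^*/v^*=-1$ from Corollary \ref{thm:2hom_qracah}, then substitute $w=w^*=0$ and the value of $v=v^*$ into (\ref{eq:defcAcB}). Your closing remark about the sign ambiguity in $a,b$ is a reasonable observation but not needed for the argument.
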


\begin{proof}
By (\ref{eq:calculateuv}) and the construction
$a^2=u/v=-1$, so  $a \in \{\bi,-\bi\}$.
Similarly $b \in \{\bi,-\bi\}$.
By Corollary \ref{thm:2hom_qracah} and (\ref{eq:defcAcB})
we obtain (\ref{eq:2homAB}).
\end{proof}

\begin{lemma}              \label{step2}
Let $W$ denote an irreducible $T$-module.
Then 
\begin{equation}                 \label{eq:calculateabW}
a(W)=a, \ \ \ \ \ \ \ \ \ b(W)=b.
\end{equation}
Moreover 
\begin{equation}                 \label{eq:calculatecW}
c(W)\in \{\bi,-\bi\}.
\end{equation}
\end{lemma}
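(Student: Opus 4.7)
For the identities $a(W) = a$ and $b(W) = b$: Lemma~\ref{thm_2hompmodules}(iv) tells us that the dual endpoint of $W$ is $\tau = \rho$ and the diameter is $d = D - 2\rho$, so the exponents $2\tau + d - D$ and $2\rho + d - D$ appearing in Lemma~\ref{lemma:moduleigenvalue} both vanish. Substituting yields (\ref{eq:calculateabW}) immediately.

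For $c(W) \in \{\bi, -\bi\}$: by Definition~\ref{def:cw} and (\ref{eq:kappa}), $c(W)$ is a root of $\xi^2 - \kappa \xi + 1 = 0$, so it suffices to check $\kappa = 0$, which forces $c(W)^2 = -1$. I would compute $\kappa$ via (\ref{eq:calculatekappa}), taking $a(W) = a$, $b(W) = b$, and $d = d(W)$. The only piece requiring actual calculation is the ratio $\phi_1/((q-q^{-1})(q^d - q^{-d}))$; by (\ref{eq:calculatphi}) at $i = 1$ we have $\phi_1 = (\theta_0^* - \theta_1^*)(a_0 - \theta_d)$, where the scalars refer to the Leonard system acting on $W$.

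The crucial step is $a_0 = 0$, and this is where bipartiteness enters. Any two vertices $y, z$ with $\partial(x, y) = \partial(x, z) = \rho$ lie in a common part of the bipartition, so $(A)_{yz} = 0$; hence $E_\rho^* A E_\rho^* = 0$ on $V$. Since $w = 0$ by Corollary~\ref{thm:2hom_qracah}, (\ref{eq:defcAcB}) then yields $E_\rho^* \cA E_\rho^* = 0$, so $a_0 = \mathrm{trace}((E_\rho^* \cA)|_W) = \mathrm{trace}((E_\rho^* \cA E_\rho^*)|_W) = 0$. Using $a^2 = b^2 = -1$ from Lemma~\ref{step1}, a short simplification gives $\theta_d = a(q^d - q^{-d})$ and $\theta_0^* - \theta_1^* = -b(q - q^{-1})(q^{d-1} + q^{1-d})$, whence $\phi_1/((q-q^{-1})(q^d - q^{-d})) = ab(q^{d-1} + q^{1-d})$. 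Because $a^{-1} = -a$ and $b^{-1} = -b$, the scalars $ab$ and $ab^{-1} = a^{-1}b$ always lie in $\{1, -1\}$ with opposite signs, so the three summands of (\ref{eq:calculatekappa}) telescope to $\kappa = 0$ (the degenerate case $d = 0$ is handled by the convention below (\ref{eq:kappa})). The main obstacle I anticipate is the index-shift bookkeeping between the global graph labeling and the local Leonard-system labeling on $W$; once $a_0 = 0$ is secured, what remains is a brief sign-check across the four choices of $(a, b) \in \{\bi, -\bi\}^2$.
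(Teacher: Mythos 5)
Your proposal is correct and follows essentially the same route as the paper: $a(W)=a$, $b(W)=b$ from Lemma \ref{thm_2hompmodules}(iv) plus Lemma \ref{lemma:moduleigenvalue}, then $c(W)^2=-1$ by computing $\phi_1$ via (\ref{eq:calculatphi}) and showing $\kappa=0$ in (\ref{eq:calculatekappa}), with the $d=0$ case handled by the convention $\kappa=0$. Your only addition is to make explicit the step the paper compresses into ``since $\G$ is bipartite,'' namely that $E_\rho^* A E_\rho^*=0$ forces $a_0=0$; your computation of $\phi_1/\bigl((q-q^{-1})(q^d-q^{-d})\bigr)=ab(q^{d-1}+q^{1-d})$ and the cancellation via $ab^{-1}=a^{-1}b=-ab$ match the paper's calculation exactly.
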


\begin{proof}
Let   $\rho$ denote the endpoint of $W$.
By Lemma \ref{thm_2hompmodules}, $W$ has 
dual endpoint $\rho$ and diameter $d=D-2\rho$. Moreover $W$ is thin.
By Lemma \ref{lemma:moduleigenvalue} we obtain $a(W)=a$ and $b(W)=b$.
We now show 
(\ref{eq:calculatecW}). 
First assume that $d=0$. Setting
$\kappa=0$ in
(\ref{eq:kappa}) we find $c(W)^2=-1$, so
(\ref{eq:calculatecW}) holds. Next assume that $d\geq 1$.
By Lemma \ref{lemma:moduleigenvalue}, the sequence 
$(\cA, \{E_i\}_{i=\rho}^{\rho + d},\cB, \{E_i^*\}_{i=\rho}^{\rho+d})$ 
acts on $W$ as a Leonard system, which we denote by $\Phi$. 
To find $c(W)$ we apply (\ref{eq:kappa}), (\ref{eq:calculatekappa}) to $\Phi$. 
We first obtain $\phi_1(\Phi)$ using (\ref{eq:calculatphi}).  
By (\ref{eq:calculatphi}) and since $\G$ is bipartite,
\begin{eqnarray*}
\phi_1(\Phi)&=&-(\theta^*_0(\Phi)-\theta^*_1(\Phi))\theta_d(\Phi)\\
   &=& -(bq^{-d}+b^{-1}q^d-bq^{2-d}-b^{-1}q^{d-2})(aq^d+a^{-1}q^{-d})\\
   &=& ab(q-q^{-1})(q^{1-d}+q^{d-1})(q^d-q^{-d}) .
\end{eqnarray*}
By this and (\ref{eq:calculatekappa}) we obtain $\kappa(\Phi)=0$. 
Therefore $c(W)^2=-1$ in view of (\ref{eq:kappa}),
and (\ref{eq:calculatecW}) follows.
\end{proof}

\begin{lemma}              \label{step3}
We have $\ba= a I$ and $\bb= b I$. 
\end{lemma}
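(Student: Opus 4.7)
The plan is to reduce this to a direct computation from the definition of $\ba$ and $\bb$, combined with the module-independent formulas for $a(W)$ and $b(W)$ already established in Lemma \ref{step2}.

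First I would recall Definition \ref{def:boldabd}, which says that $\ba = \sum_{\psi \in \Psi} a(\psi) e_\psi$ and $\bb = \sum_{\psi \in \Psi} b(\psi) e_\psi$, where $a(\psi)$ and $b(\psi)$ denote the common values of $a(W)$ and $b(W)$ across all irreducible $T$-modules $W$ of type $\psi$. Next I would invoke Lemma \ref{step2}: for every irreducible $T$-module $W$, one has $a(W) = a$ and $b(W) = b$, the scalars fixed above Lemma \ref{lemma:tildeeigenvalue}. In particular, $a(\psi) = a$ and $b(\psi) = b$ for every $\psi \in \Psi$.

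Substituting these constant values into the sums gives
\begin{eqnarray*}
\ba = a \sum_{\psi \in \Psi} e_\psi, \qquad \bb = b \sum_{\psi \in \Psi} e_\psi.
\end{eqnarray*}
The final step is to use the fact, noted at the end of Section 4, that $\sum_{\psi \in \Psi} e_\psi = I$. This immediately yields $\ba = aI$ and $\bb = bI$.

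There is no real obstacle here: the lemma is essentially a bookkeeping consequence of the preceding results. The only thing worth pausing on is the implicit use of the assumption, which holds in the present setting by Lemma \ref{thm_2hompmodules}(iii), that every irreducible $T$-module is thin; this assumption is required so that the scalars $a(\psi), b(\psi)$ (and hence the matrices $\ba, \bb$) are defined at all.
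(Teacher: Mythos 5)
Your proof is correct and follows exactly the paper's own argument: apply Lemma \ref{step2} (i.e.\ (\ref{eq:calculateabW})) to get $a(\psi)=a$ and $b(\psi)=b$ for all $\psi\in\Psi$, then substitute into Definition \ref{def:boldabd} and use $I=\sum_{\psi\in\Psi}e_\psi$. Your added remark that thinness (Lemma \ref{thm_2hompmodules}(iii)) is needed for $\ba,\bb$ to be defined is a small but accurate point the paper leaves implicit.
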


\begin{proof}
We show that $\ba = a I$. 
The element $\ba$ is given in Definition \ref{def:boldabd}.
By (\ref{eq:calculateabW}), $a(\psi)=a$ for all $\psi \in \Psi$.
We mentioned at the end of Section 4 that 
$I=\sum_{\psi \in \Psi} \, e_\psi$. By these comments $\ba = a I$. 
One similarly shows that $\bb = b I$.
\end{proof}

\begin{lemma}              \label{step4}
We have 
\begin{equation}
\ba+\ba^{-1}= 0, \ \ \ \ \ \
\bb+\bb^{-1}= 0, \ \ \ \ \ \
\bc+\bc^{-1}= 0. 
\end{equation}
\end{lemma}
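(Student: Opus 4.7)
The plan is to reduce each of the three identities to a scalar identity of the form $\xi + \xi^{-1} = 0$ by exploiting that all the relevant scalars lie in $\{\bi, -\bi\}$, and then to transport this to the matrix level via Definition \ref{def:boldabd}, Definition \ref{def:boldc}, Lemma \ref{def:boldabdinv}, and Lemma \ref{lemma:cinv}.

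First, I would handle $\ba + \ba^{-1}$ and $\bb + \bb^{-1}$. By Lemma \ref{step1} we have $a \in \{\bi,-\bi\}$, hence $a^2 = -1$, so $a^{-1} = -a$ and therefore $a + a^{-1} = 0$; the same reasoning gives $b + b^{-1} = 0$. By Lemma \ref{step3}, $\ba = aI$ and $\bb = bI$, and these are scalar multiples of $I$, so $\ba^{-1} = a^{-1}I$ and $\bb^{-1} = b^{-1}I$. Adding yields $\ba + \ba^{-1} = (a+a^{-1})I = 0$ and similarly $\bb + \bb^{-1} = 0$.

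Next I would handle $\bc + \bc^{-1}$. By Lemma \ref{step2}, for every irreducible $T$-module $W$ we have $c(W) \in \{\bi,-\bi\}$, so $c(W)^2 = -1$ and hence $c(W) + c(W)^{-1} = 0$. Passing to isomorphism types, $c(\psi) + c(\psi)^{-1} = 0$ for every $\psi \in \Psi$. Using Definition \ref{def:boldc} together with Lemma \ref{lemma:cinv},
\begin{equation*}
\bc + \bc^{-1} = \sum_{\psi \in \Psi} \bigl(c(\psi) + c(\psi)^{-1}\bigr)\, e_\psi = 0.
\end{equation*}

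There is no real obstacle here: the content of the lemma is that everything in sight squares to $-1$, and that fact was already established in Lemmas \ref{step1} and \ref{step2}. The only thing to be a little careful about is that $\bc^{-1}$ is meaningful, which is precisely the content of Lemma \ref{lemma:cinv}, and that the central idempotents $e_\psi$ sum to $I$, so no additional normalization is needed.
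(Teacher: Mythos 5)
Your proof is correct and follows essentially the same route as the paper: the first two identities from Lemma \ref{step3} together with $a,b\in\{\bi,-\bi\}$, and the third from Definition \ref{def:boldc}, Lemma \ref{lemma:cinv}, and (\ref{eq:calculatecW}). You simply spell out the scalar computation $\xi+\xi^{-1}=0$ for $\xi^2=-1$ that the paper leaves implicit.
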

\begin{proof}
The first two equations follow from Lemma \ref{step3} and $a,b \in \{\bi,-\bi\}$.
The third equation follows from Definition \ref{def:boldc},
Lemma \ref{lemma:cinv}, and (\ref{eq:calculatecW}).
\end{proof}

\begin{theorem}              \label{step5}
Let $\cC$ be as in Theorem {\rm \ref{defineC}}. Then 
\begin{eqnarray}
\cA + \frac{q\;\cB \cC-q^{-1}\cC \cB}{q^2-q^{-2}} &=&  0,  \label{eq:AW1a}\\
\cB + \frac{q\;\cC \cA -q^{-1}\cA \cC}{q^2-q^{-2}} &=&  0, \label{eq:AW2a}\\
\cC +\frac{q\;\cA\cB-q^{-1}\cB\cA}{q^2-q^{-2}} &=& 0. \label{eq:AW3a}
\end{eqnarray}
\end{theorem}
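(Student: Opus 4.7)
The plan is a direct substitution argument. By Lemma \ref{thm_2hompmodules}(iii), every irreducible $T$-module is thin, so Theorem \ref{defineC} applies and the three identities (\ref{eq:AWDRG1})--(\ref{eq:AWDRG3}) hold. The key observation is that in the present 2-homogeneous bipartite setting, the right-hand sides of (\ref{eq:AWDRG1})--(\ref{eq:AWDRG3}) all collapse to zero, by virtue of Lemma \ref{step4}.

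More concretely, I would start by quoting Theorem \ref{defineC} to write each expression in (\ref{eq:AW1a})--(\ref{eq:AW3a}) as a fraction whose numerator is a combination of the three factors $\ba+\ba^{-1}$, $\bb+\bb^{-1}$, $\bc+\bc^{-1}$ and the factor $\Lambda+\Lambda^{-1}$. Each numerator has the shape $(X+X^{-1})(\Lambda+\Lambda^{-1}) + (Y+Y^{-1})(Z+Z^{-1})$ where $\{X,Y,Z\} = \{\ba,\bb,\bc\}$. Now apply Lemma \ref{step4}: the three factors $\ba+\ba^{-1}$, $\bb+\bb^{-1}$, $\bc+\bc^{-1}$ are all zero, so every such numerator vanishes. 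Dividing by $q+q^{-1}$ (nonzero since $q^4 \ne 1$) yields $0$ on each right-hand side.

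No real obstacle is expected here; the substance is in Theorem \ref{defineC} and in the computations behind Lemma \ref{step4}, both already established. The proof amounts to one observation per equation.

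\begin{proof}
By Lemma \ref{thm_2hompmodules}(iii), every irreducible $T$-module is thin, so Theorem \ref{defineC} applies and yields (\ref{eq:AWDRG1})--(\ref{eq:AWDRG3}). By Lemma \ref{step4}, each of $\ba+\ba^{-1}$, $\bb+\bb^{-1}$, $\bc+\bc^{-1}$ equals $0$. Substituting these into the right-hand sides of (\ref{eq:AWDRG1})--(\ref{eq:AWDRG3}) gives (\ref{eq:AW1a})--(\ref{eq:AW3a}).
\end{proof}
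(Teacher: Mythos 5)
Your proof is correct and is essentially identical to the paper's, which likewise establishes (\ref{eq:AW1a})--(\ref{eq:AW3a}) by evaluating (\ref{eq:AWDRG1})--(\ref{eq:AWDRG3}) using Lemma \ref{step4}. Your added remarks --- citing Lemma \ref{thm_2hompmodules}(iii) for the thinness hypothesis and noting $q+q^{-1}\ne 0$ --- are harmless elaborations of the same argument.
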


\begin{proof}
Evaluate (\ref{eq:AWDRG1})--(\ref{eq:AWDRG3}) using Lemma \ref{step4}.
\end{proof}

\begin{rem}
The equations (\ref{eq:AW1a})--(\ref{eq:AW3a}) can be obtained 
directly from \cite[Lemma 3.3]{Curtin2}. 
\end{rem}

%


\bigskip

\noindent
Paul Terwilliger \\
Department of Mathematics \\
University of Wisconsin \\
480 Lincoln Drive \\
Madison, WI 53706-1388 USA \\
email: \texttt{terwilli@math.wisc.edu}

\bigskip

\noindent
Arjana \v Zitnik,\\
Faculty of Mathematics and Physics, University of Ljubljana and \\
Institute of Mathematics, Physics and Mechanics,\\
Jadranska 19, 1000 Ljubljana, Slovenia\\
email: \texttt{arjana.zitnik@fmf.uni-lj.si}

\end{document}